 \theoremstyle{plain}
\newtheorem{thm}{Theorem}[section]
  \theoremstyle{plain}
  \newtheorem{prop}[thm]{Proposition}
  \theoremstyle{plain}
  \newtheorem{lem}[thm]{Lemma}
 \theoremstyle{definition}
 \newtheorem*{defn*}{Definition}
  \theoremstyle{plain}
  \theoremstyle{remark}
    \newtheorem{rem}[thm]{Remark}
\renewcommand{\hat}{\widehat}
\renewcommand{\phi}{\varphi}
\DeclareMathOperator{\diam}{diam}
\newcommand{\R}{\mathbb {R}}
\newcommand{\N}{\mathbb {N}}
\newcommand{\1}{\mathbbm{1}}
\renewcommand{\hat}{\widehat}
\renewcommand{\phi}{\varphi}
\renewcommand{\epsilon}{\varepsilon}
\def\diam {\mathop {\hbox{\rm diam}}}
\def\R{\mathbb{R}}
\def\N{\mathbb{N}}
\begin{document}
 \onehalfspacing
 

\title[On the Lebesgue 
measure of sum-level sets]{On the 
 Lebesgue measure of
sum-level sets for continued fractions}

\author{Marc Kesseböhmer}

\address{Fachbereich 3 -- Mathematik und Informatik, Universität Bremen, Bibliothekstr.
1, D--28359 Bremen, Germany}

\email{mhk@math.uni-bremen.de}

\author{Bernd O. Stratmann}

\address{Mathematical Institute, University of St. Andrews, North Haugh, St.
Andrews KY16 9SS, Scotland}

\email{bos@maths.st-and.ac.uk}

\subjclass{Primary 37A45; Secondary 11J70, 11J83 28A80, 20H10}

\date{\today}

\keywords{Continued fractions, thermodynamical formalism, multifractals, infinite
ergodic theory, phase transition, intermittency, Stern--Brocot sequence,
Farey sequence, Gauss map, Farey map}
\begin{abstract}\singlespacing
In this paper we give a detailed measure theoretical analysis of 
what we call sum-level sets for regular continued fraction 
expansions. The first main result is to settle a recent conjecture of 
Fiala and Kleban, which asserts that the Lebesgue measure 
of these level sets decays to zero, for the level tending to 
infinity.  The second and third main result then give  precise asymptotic 
estimates for this decay. The proofs of these results are based on recent 
progress in   
infinite ergodic theory, and in particular, they  give  non-trivial
applications of this theory to number theory. The paper closes with a discussion of the 
thermodynamical significance of the obtained results, and with some 
applications of these to metrical Diophantine analysis.
\end{abstract}
\maketitle
\section{Introduction and statements of result}

\noindent In this paper we consider classical number theoretical dynamical
systems arising from the Gauss map $\mathfrak{g}:x\mapsto1/x\mod1$
(for $x\in[0,1]$). It is well known that the inverse branches of
$\mathfrak{g}$ give rise to an expansion of the reals in the unit
interval with respect to the infinite alphabet $\N$. This expansion
is given by the regular continued fraction expansion \[
[a_{1},a_{2},\ldots]:=\cfrac{1}{a_{1}+\cfrac{1}{a_{2}+\ldots}},\]
 where all the $a_{i}$ are positive integers.
 
 The main task of this paper is to give a detailed measure-theoretical
analysis of the following sets $\mathcal{C}_{n}$, for $n \in \N$, which 
we will refer
to as the sum-level sets: \[
\mathcal{C}_{n}:=\{[a_{1},a_{2},\ldots] \in [0,1]:\sum_{i=1}^{k} a_{i}=n 
\mbox{ for some } k\in\N\}.\]
 A first inspection of the sequence of these sets shows that
 $\liminf_{n} \mathcal{C}_{n}$
is equal to the set of all noble numbers, that is, numbers whose infinite
continued fraction expansions end with an infinite block of $1$'s.
Also, one immediately verifies that $\limsup_{n}\mathcal{C}_{n}$
is equal to the set of all irrational numbers in $[0,1]$. Hence,
at first sight, the sequence of sum-level sets appears to be
far away from being a canonical dynamical entity. In order to state
the main results, 
 note that for the first four members of the sequence of the sum-level
sets (cf. Fig. \ref{fig:first-level-sum-sets}) one immediately computes that \[
\lambda(\mathcal{C}_{1})=1/2,\lambda(\mathcal{C}_{2})=1/3,\lambda(\mathcal{C}_{3})=3/10,\lambda(\mathcal{C}_{4})=39/140.\]
 From this one might already suspect that $\lambda\left(\mathcal{C}_{n}\right)$
is decreasing for $n$ tending to infinity. In fact, it was conjectured
by Fiala and Kleban in \cite{FK} that $\lambda\left(\mathcal{C}_{n}
\right)$
tends to zero, as $n$ tends to infinity. The first  main result
of this paper is to settle this conjecture.
\begin{thm}
\label{thm1} \[
\lim_{n\to\infty}\lambda(\mathcal{C}_{n})=0.\]
\end{thm}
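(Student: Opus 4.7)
The plan is to recast the sum-level sets dynamically via the Farey map $F:[0,1]\to[0,1]$, defined by $F(x)=x/(1-x)$ on $[0,1/2]$ and $F(x)=(1-x)/x$ on $[1/2,1]$. One application of $F$ to a continued fraction $[a_1,a_2,\ldots]$ either decreases the leading partial quotient $a_1$ by one (when $a_1\geq 2$) or deletes it (when $a_1=1$); consequently, after $n$ iterations of $F$ the orbit sits at the ``fresh'' continued fraction $[a_{k+1},a_{k+2},\ldots]$ precisely when $n=\sum_{i=1}^{k}a_i$. My first step is to turn this observation into the identification
\[
\mathcal{C}_n \;=\; F^{-(n-1)}\bigl([1/2,1]\bigr),
\]
obtained by noting that $x\in\mathcal{C}_n$ if and only if the leading partial quotient of $F^{n-1}(x)$ equals $1$. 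The tabulated values $\lambda(\mathcal{C}_1)=1/2$, $\lambda(\mathcal{C}_2)=1/3$, $\lambda(\mathcal{C}_3)=3/10$ provide a sanity check.

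Once the identification is in place, the theorem becomes an asymptotic statement about iterates of the Perron--Frobenius operator $L_F$ of $F$ with respect to Lebesgue measure, since
\[
\lambda(\mathcal{C}_n) \;=\; \int_{[1/2,1]} L_F^{\,n-1}\1 \,d\lambda.
\]
The Farey map is the prototype of an interval map with an indifferent fixed point (at $0$); it preserves the infinite $\sigma$-finite measure $d\mu=dx/x$, whose invariant density $h(x)=1/x$ satisfies $L_F h=h$. In this setting the key input from infinite ergodic theory is Thaler's asymptotic theorem for transfer operators of such maps: for suitable $f$ and uniformly on compact subsets of $(0,1]$,
\[
a_n\cdot (L_F^{\,n} f)(x) \;\longrightarrow\; h(x)\int_0^1 f\,d\lambda,
\]
where $a_n\to\infty$ is the wandering rate of $F$; for the Farey map one has $a_n\sim\log n$. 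Applying this with $f\equiv 1$ and integrating over the compact interval $[1/2,1]\subset(0,1]$ would yield
\[
\log(n)\cdot\lambda(\mathcal{C}_n) \;\longrightarrow\; \int_{1/2}^{1}\frac{dx}{x} \;=\; \log 2,
\]
from which $\lim_{n\to\infty}\lambda(\mathcal{C}_n)=0$ follows at once, together with the sharp rate $\lambda(\mathcal{C}_n)\sim\log 2/\log n$ that presumably underpins the subsequent main results of the paper.

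The main obstacle is the clean invocation of Thaler's theorem: one must verify that the Farey map satisfies the required hypotheses (uniform expansion off a neighbourhood of $0$, suitable regular variation of $F$ near $0$, etc.) and that the constant function $\1$ lies in the class of test functions for which uniform-on-compacts convergence of $a_n L_F^{\,n} f$ holds. A subsidiary technicality is the careful book-keeping required to turn the intuitive picture ``$F$ eats continued-fraction digits one at a time'' into the rigorous identification $\mathcal{C}_n=F^{-(n-1)}([1/2,1])$, with due attention to the measure-zero set of rational boundary points admitting two continued-fraction representations. Should the full Thaler asymptotic be avoided at this stage, a fallback is to combine Aaronson's theorem (which, applied to $\1_{[1/2,1]}\in L^1(\mu)$, yields Ces\`aro convergence $\tfrac{1}{N}\sum_{n\leq N}\lambda(\mathcal{C}_n)\to 0$) with monotonicity of $n\mapsto\lambda(\mathcal{C}_n)$ extracted from the recurrence $\lambda(\mathcal{C}_{n+1})=2\int_{\mathcal{C}_n}(1+y)^{-2}\,dy$.
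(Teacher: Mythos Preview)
Your main approach via Thaler-type transfer-operator asymptotics is correct and is in fact how the paper proves its stronger Theorem~\ref{thm2}, the sharp estimate $\lambda(\mathcal{C}_n)\sim 1/\log_2 n$. The paper packages that argument through the notion of a \emph{uniformly returning set}, and one of the hypotheses needed there is precisely that $(\hat{T}^n\phi_0)|_{\mathcal{C}_1}$ is decreasing in $n$; this monotonicity is proved separately (Lemma~\ref{lem:decrease}) using that $\hat{T}$ preserves the cone $\{g\in C^2:g'\ge 0,\ g''\le 0\}$ together with the boundary identity $\hat{T}f(1)=f(1/2)$. For Theorem~\ref{thm1} itself the paper offers two lighter arguments, neither invoking the full Thaler asymptotic: (i) an elementary Stern--Brocot count giving $\liminf_n\lambda(\mathcal{C}_n)=0$, combined with the monotonicity lemma just mentioned; and (ii) a direct proof that the Farey map is exact, followed by Lin's criterion to deduce $\lambda(T^{-n}C)\to 0$ for every $C$ with $\mu(C)<\infty$. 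So your route yields more than is asked, at the cost of heavier machinery; the paper's separation is deliberate.

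Your fallback is close in spirit to route (i), but the monotonicity step does not come for free from the recurrence $\lambda(\mathcal{C}_{n+1})=2\int_{\mathcal{C}_n}(1+y)^{-2}\,dy$: since $2(1+y)^{-2}>1$ for $y<\sqrt 2-1$, the inequality $\lambda(\mathcal{C}_{n+1})\le\lambda(\mathcal{C}_n)$ would require knowing how $\mathcal{C}_n$ is distributed inside $[0,1]$, which is exactly what is not obvious a priori. The paper's cone-preservation argument in Lemma~\ref{lem:decrease} is the missing ingredient here, and it is also what feeds into verifying the hypotheses in your main Thaler-based route; so even on that path you would not bypass it.
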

\noindent We  give two independent proofs of this theorem. The first 
of these is almost elementary and 
only
 mildly spiced with infinite ergodic
theory, whereas the second proof  will be deduced from  a
significantly  stronger 
result (see Proposition \ref{pre-exact} for the details). In a 
nutshell, here we give a detailed proof of the fact that the
Farey map $T$ is an exact transformation, which in turn 
allows to use a criterion of Lin in order 
to deduce the result. 

For the next station on our journey of investigating the asymptotic 
behaviour of the sequence $\left(\lambda(\mathcal{C}_{n})\right)$,
we  employ the continued fraction mixing property of the induced map 
of the Farey map $T$ on 
$\lambda(\mathcal{C}_{1})$, in order to show that $\mathcal{C}_{1}$ is 
a Darling--Kac set for $T$. A computation of the return sequence 
of $T$  then leads to the following theorem, 
\begin{figure}
\includegraphics[width=0.8\textwidth]{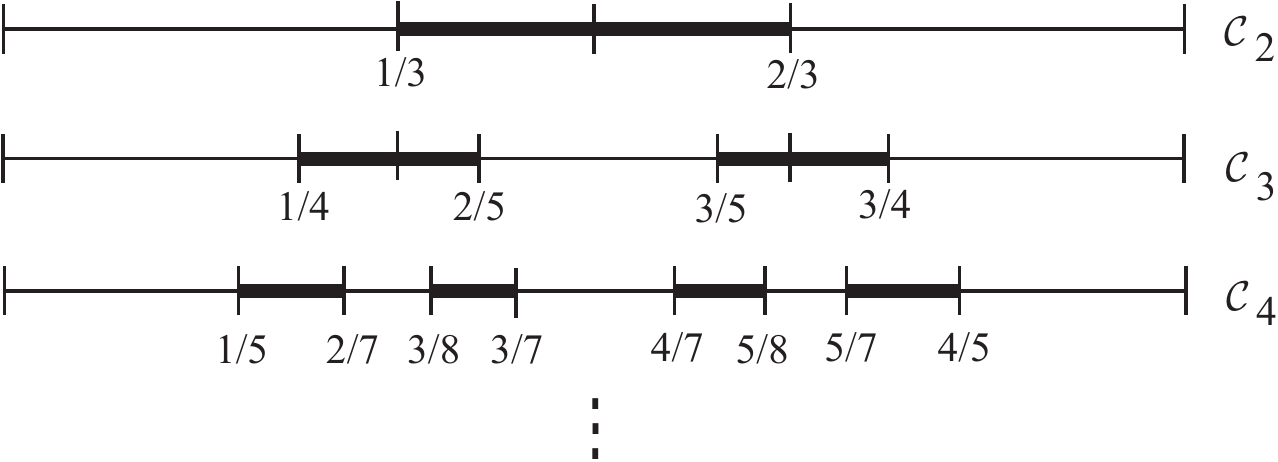}
\caption{\label{fig:first-level-sum-sets}The first sum-level sets.}
\end{figure}
where we use the 
common notation $b_{n}\sim c_{n}$
to denote that $\lim_{n\to\infty}b_{n}/c_{n}=1$. 
\begin{thm}\label{thm1.5}
\[
\sum_{k=1}^{n}\lambda\left(\mathcal{C}_{k}\right)
\sim\frac{n}{\log_{2} n}.\]
 \end{thm}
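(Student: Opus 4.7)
The plan is to recast the partial sum as an occupation-time integral for the Farey map $T$ and then apply the Darling--Kac theorem from infinite ergodic theory. First I would observe that $T$ acts on continued-fraction expansions by decrementing the leading digit: $[a_1,a_2,\ldots] \mapsto [a_1-1,a_2,\ldots]$ if $a_1 \geq 2$, and $[1,a_2,a_3,\ldots] \mapsto [a_2,a_3,\ldots]$ if $a_1 = 1$. A direct unwinding of the definition of $\mathcal{C}_n$ then yields $\mathcal{C}_n = T^{-(n-1)}(\mathcal{C}_1)$, so that
\[
\sum_{k=1}^n \lambda(\mathcal{C}_k) = \int_0^1 S_n(x)\, d\lambda(x), \qquad S_n := \sum_{j=0}^{n-1} \mathbbm{1}_{\mathcal{C}_1}\circ T^j.
\]

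Since $T$ preserves the infinite $\sigma$-finite measure $d\mu = dx/x$, with $\mu(\mathcal{C}_1) = \log 2$, my plan is to identify $\mathcal{C}_1$ as a Darling--Kac set for $T$, compute its return sequence, and apply the Darling--Kac theorem to $S_n$. For the first step I would use that the first-return map $T_{\mathcal{C}_1}$ is conjugate to the Gauss map: the return time at $x = [1,a_2,a_3,\ldots]$ is exactly $a_2$, and the induced action on tails of continued-fraction digits is the left shift. The continued-fraction mixing property of the Gauss map, cited in the excerpt just before the theorem, then supplies the uniform asymptotic for normalized Birkhoff sums on $\mathcal{C}_1$ that characterizes the Darling--Kac condition.

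A short cylinder computation gives $\mu(\mathcal{C}_1 \cap \{a_2 > n\}) \sim 1/n$, so the return time $\phi = a_2$ lies in the critical, index-one regularly varying regime. The wandering rate $\sum_{k \le n}\mu(\phi > k)$ then grows like $\log n$, and Aaronson's formula yields a return sequence $a_n \sim n/\log n$. Because the Mittag--Leffler limit law for parameter $\alpha = 1$ degenerates at $1$, the Darling--Kac theorem gives $S_n/a_n \to \mu(\mathcal{C}_1) = \log 2$ in probability; since $d\lambda/d\mu = x$ is bounded on $[0,1]$, a standard uniform integrability argument (or, equivalently, the dual Thaler-type statement for the transfer operator of $T$) upgrades this to convergence in mean. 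Consequently,
\[
\sum_{k=1}^n \lambda(\mathcal{C}_k) = \int_0^1 S_n\, d\lambda \;\sim\; \log 2 \cdot a_n \;\sim\; \log 2 \cdot \frac{n}{\log n} \;=\; \frac{n}{\log_2 n}.
\]

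The hard part will be the sharp calibration of the return sequence together with the upgrade from in-probability to $L^1(\lambda)$ convergence in the critical regime $\alpha = 1$: establishing the Darling--Kac property with $a_n \sim n/\log n$ carrying the right multiplicative constant requires transferring the strong mixing of the induced Gauss map into a uniform statement on $\mathcal{C}_1$, and controlling $S_n/a_n$ well enough to interchange limit and integral against $\lambda$ requires precise tail bounds on $\phi = a_2$ combined with bounded distortion for $T$ near its indifferent fixed point. Once these are in place, the opening identity converts the limit into Theorem~\ref{thm1.5}.
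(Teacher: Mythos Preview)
Your skeleton matches the paper's: both identify $\mathcal{C}_1$ as a Darling--Kac set for $T$ via the conjugacy of the first-return map to the Gauss map and its continued-fraction mixing, both compute the wandering rate $W_n(\mathcal{C}_1)\sim\log n$, and both read off the return sequence $a_n\sim n/\log n$ from Aaronson's relation. The difference is the closing step. The paper stays on the dual (transfer-operator) side: from $\lambda(\mathcal{C}_k)=\mu\bigl(\1_{\mathcal{C}_1}\cdot\hat T^{\,k-1}\phi_0\bigr)$ one gets
\[
\sum_{k=1}^{n}\lambda(\mathcal{C}_k)=\int_{\mathcal{C}_1}\Bigl(\sum_{k=0}^{n-1}\hat T^{\,k}\phi_0\Bigr)\,d\mu,
\]
and then pointwise dual ergodicity (indeed the \emph{uniform} convergence on $\mathcal{C}_1$, which is exactly the ``uniform set for $\phi_0$'' property stated as (II) in the proof of Theorem~\ref{thm2}) gives $\frac{1}{\nu_n}\sum_{k<n}\hat T^{\,k}\phi_0\to\mu(\phi_0)=1$ uniformly on $\mathcal{C}_1$, so the integral over the finite-measure set $\mathcal{C}_1$ passes to the limit immediately. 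You instead go through the Darling--Kac distributional limit theorem on the primal side to get $S_n/a_n\to\log 2$ in probability, and are then left with the upgrade to $L^1(\lambda)$, which you correctly identify as the delicate point. That detour is avoidable: the ``dual Thaler-type statement for the transfer operator'' that you mention parenthetically \emph{is} the paper's route and bypasses the distributional theorem entirely. In short, your plan is sound, but the hard part you anticipate is an artifact of working on the primal side; switching to the dual formulation over $\mathcal{C}_1$ makes the final limit a one-line consequence of uniform convergence.
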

 Our third theorem gives a significant improvement of Theorem
\ref{thm1} and Theorem
\ref{thm1.5}. That is, by increasing the dosage of infinite ergodic
theory, we obtain the following sharp estimate for the asymptotic
behaviour of the Lebesgue measure of the sum-level sets. 
\begin{thm}
\label{thm2} \[
\lambda(\mathcal{C}_{n})\sim\frac{1}{\log_{2}n}.\]
\end{thm}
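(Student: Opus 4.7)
The plan is to reformulate the assertion dynamically as an asymptotic for the Perron--Frobenius operator of the Farey map, and then to appeal to Thaler's asymptotic type theorem from infinite ergodic theory. Let $T$ denote the Farey map, defined by $T(x)=x/(1-x)$ for $x\in[0,1/2]$ and $T(x)=(1-x)/x$ for $x\in[1/2,1]$, and set $A:=[1/2,1]$. A short induction on the continued--fraction expansion shows that, for every $x=[a_{1},a_{2},\ldots]$ and every $j\ge 0$, one has $T^{j}(x)\in A$ if and only if $j+1=\sum_{i=1}^{k}a_{i}$ for some $k\ge 1$, so that $\mathcal{C}_{n}=T^{-(n-1)}(A)$ for all $n\ge 1$. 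Writing $\hat{T}$ for the Perron--Frobenius operator of $T$ with respect to Lebesgue measure $\lambda$, duality yields
\[
\lambda(\mathcal{C}_{n})=\int_{A}\hat{T}^{\,n-1}\mathbf{1}\,d\lambda.
\]

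Next I would upgrade the Darling--Kac information already used in the proof of Theorem~\ref{thm1.5} to a pointwise transfer--operator asymptotic. The Farey map preserves the infinite $\sigma$--finite measure $d\mu=h\,d\lambda$ with $h(x)=1/x$; the first--return map $T_{A}$ on $A$ is conjugate, via $x\mapsto(1-x)/x$, to the Gauss map, hence Gibbs--Markov with a spectral gap on bounded--variation observables; and the return--time function $\varphi$ satisfies $\mu_{A}(\varphi>n)=\log_{2}(1+1/(n+1))\sim 1/(n\log 2)$. This places the Farey system at the boundary case $\alpha=1$ of Thaler's framework with wandering rate $w_{n}\sim\log n$, so the asymptotic type theorem should deliver
\[
w_{n}\,\hat{T}^{\,n}\mathbf{1}(x)\longrightarrow h(x)=\frac{1}{x}\qquad\text{uniformly for }x\in A.
\]
Integrating this over $A$ and using $\int_{A}h\,d\lambda=\log 2$ gives $\lambda(\mathcal{C}_{n})\sim \log 2/\log n=1/\log_{2} n$, which is the claim.

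The hard step is the uniform asymptotic $w_{n}\hat{T}^{\,n}\mathbf{1}\to h$ on $A$. Thaler's theorem is usually stated for observables whose support is bounded away from the indifferent fixed point $0$, whereas $\mathbf{1}$ carries mass on every neighbourhood of $0$. My plan is to handle this by the splitting $\mathbf{1}=\mathbf{1}_{[\varepsilon,1]}+\mathbf{1}_{[0,\varepsilon)}$: Thaler's theorem applies directly to the first summand, and one must show that the remainder $\int_{A}\hat{T}^{\,n-1}\mathbf{1}_{[0,\varepsilon)}\,d\lambda=\lambda\bigl([0,\varepsilon)\cap T^{-(n-1)}(A)\bigr)$ is negligible on the scale $1/\log n$ as first $n\to\infty$ and then $\varepsilon\to 0^{+}$. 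This reduces to a sharp escape--from--origin estimate for orbits starting near $0$, which can be extracted from the same return--time tail computed above. That escape estimate is the technically delicate heart of the argument; all other ingredients are either classical Darling--Kac theory or the ergodic material already harnessed for Theorem~\ref{thm1.5}.
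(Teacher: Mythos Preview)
Your setup and target coincide with the paper's: both reduce Theorem~\ref{thm2} to the uniform asymptotic $W_{n}\,\mathcal{L}^{n}\mathbf{1}\to h$ on $\mathcal{C}_{1}=[1/2,1]$ (equivalently, in the paper's language, that $\mathcal{C}_{1}$ is a \emph{uniformly returning} set for $\phi_{0}(x)=x$ with respect to the infinite invariant measure $\mu$), after which integration over $\mathcal{C}_{1}$ yields $\log n\cdot\lambda(\mathcal{C}_{n})\to\log 2$.

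The difference lies in how that uniform asymptotic is obtained. You propose to invoke a Thaler-type pointwise theorem directly and then patch a perceived support problem for $\mathbf{1}$ via the splitting $\mathbf{1}=\mathbf{1}_{[\varepsilon,1]}+\mathbf{1}_{[0,\varepsilon)}$ together with an escape-from-origin estimate that you yourself flag as the ``technically delicate heart'' and leave unexecuted. The paper sidesteps this entirely through a Tauberian route: (i)~Thaler's conditions (Adler's condition in particular) give only that $\mathcal{C}_{1}$ is a \emph{uniform} set for $\phi_{0}$, i.e.\ the Ces\`aro averages $v_{n}^{-1}\sum_{k<n}\hat{T}^{k}\phi_{0}$ converge uniformly on $\mathcal{C}_{1}$; (ii)~the elementary convexity computation of Lemma~\ref{lem:decrease} (already used in the first proof of Theorem~\ref{thm1}) shows that the sequence $\hat{T}^{n}\phi_{0}$ is monotone decreasing on $\mathcal{C}_{1}$; (iii)~a general Tauberian result of Kesseb\"ohmer--Slassi upgrades ``uniform set $+$ slowly varying wandering rate $+$ monotone iterates'' to ``uniformly returning set''. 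No splitting and no escape estimate are needed. The monotonicity lemma is precisely the device your plan lacks; it replaces your delicate tail control by a one-line inequality and is what makes the passage from Ces\`aro to pointwise painless here. Your proposed escape estimate would amount to proving that $\limsup_{n}\bigl(\log n\cdot\lambda([0,\varepsilon)\cap\mathcal{C}_{n})\bigr)\to 0$ as $\varepsilon\to 0^{+}$, which is plausible but not an obvious consequence of the return-time tail alone, so as written your argument has a genuine gap at its self-identified hard step.
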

We then continue by relating these results on the sum-level sets to the thermodynamical analysis of the
Stern--Brocot system obtained in \cite{KesseboehmerStratmann:07}. We obtain the, on a first sight, slightly surprising  result  that this thermodynamical analysis  can 
be obtained from an exclusive use of either the sequence $\left(\mathcal{C}_{n}\right)$ or 
alternatively its complementary sequence $\left(\mathcal{C}_{n}^{c}\right)$, rather than 
using the Stern--Brocot sequence in total. 
 In particular, this reveals that the vanishing of $\lim_{n\to\infty}\lambda(\mathcal{C}_{n})$
is very much a phenomenon of the fact that the Stern--Brocot system
has a phase transition of order two at the point at which infinite ergodic
theory takes over the regime from finite ergodic theory.
A detailed discussion of this application to the thermodynamical 
formalism  is given in  Section \ref{Thermo}. Finally, 
in Section \ref{Diophant},
we apply Theorem \ref{thm2} to classical metrical Diophantine analysis, 
and  derive in this way a certain 
algebraic Khintchine-like law  (see Lemma \ref{CorD}).

\section{Sum-level sets, Stern--Brocot intervals, and the infinite
Farey system\label{subsec:Farey}}

In the introduction we defined the sequence $\left(\mathcal{C}_{n}\right)$
of sum-level sets via the sum of the first  entries in the continued 
fraction expansions. For later
convenience, let us also add $\mathcal{C}_{0}:=[0,1]$ to this sequence.
Let us begin with some brief comments on various equivalent ways of expressing the sum-level
sets. 

\subsection{ $\pmb{\mathcal{C}_{n}}$ in terms of Stern--Brocot intervals}  
Recall the following classical construction
of Stern--Brocot intervals (SB--intervals) (cf. \cite{Stern1858},
\cite{Brocot:1860}). For each $n\in\N_{0}$, the elements of the
$n$-th member of the Stern--Brocot sequence \[
\left\{ \frac{s_{n,k}}{t_{n,k}}:k=1,\ldots,2^{n}+1\right\} \]
 are defined recursively as follows: 
\begin{itemize}
\item $s_{0,1}:=0\,\,$ and $\,\, s_{0,2}:=t_{0,1}:=t_{0,2}:=1$; 
\item $s_{n+1,2k-1}:=s_{n,k}\quad\textrm{and}\quad t_{n+1,2k-1}:=t_{n,k},$
for $k=1,\ldots,2^{n}+1$; 
\item $s_{n+1,2k}:=s_{n,k}+s_{n,k+1}\quad\textrm{and}\quad t_{n+1,2k}:=t_{n,k}+t_{n,k+1}$,
for $k=1,\ldots2^{n}$. 
\end{itemize}
The set $\mathcal{T}_{n}$ of SB--intervals of order $n$ is 
given by \[
\mathcal{T}_{n}:=\left\{ \left[\frac{s_{n,k}}{t_{n,k}},\frac{s_{n,k+1}}{t_{n,k+1}}\right]:\, k=1,\ldots,2^{n}\right\} .\]
 By means of these intervals, the sum-level sets $\mathcal{C}_{n}$
are then given as follows. For $n=0,1$, we have $\mathcal{C}_{0}=[s_{0,1}/t_{0,1},s_{0,2}/t_{0,2}]$
and $\mathcal{C}_{1}=[s_{1,2}/t_{1,2},s_{1,3}/t_{1,3}]$. For $n>1$,
we have \[
\mathcal{C}_{n}=\bigcup_{k=1}^{2^{n-2}}\left[\frac{s_{n,4k-2}}{t_{n,4k-2}},\frac{s_{n,4k}}{t_{n,4k}}\right].\]
 Note that this point of view of $\mathcal{C}_{n}$ is the one chosen
in \cite{FK}, where $\mathcal{C}_{n}$ was referred to as  the set
of even intervals.  Also, note that these even intervals are not SB--intervals.
However, we clearly have that each of them is the union of two neighbouring
SB--intervals of order $n$. That is, \[
\left[\frac{s_{n,4k-2}}{t_{n,4k-2}},\frac{s_{n,4k}}{t_{n,4k}}\right]
=\left[\frac{s_{n,4k-2}}{t_{n,4k-2}},\frac{s_{n,4k-1}}{t_{n,4k-1}}
\right]\cup\left[\frac{s_{n,4k-1}}{t_{n,4k-1}},\frac{s_{n,4k}}{t_{n,4k}}
\right].\]
Throughout, we will use the notation $\mathcal{C}_{n}^{c}$ to  
denote the set of SB--intervals of order $n$ that are not in 
$\mathcal{C}_{n}$. Also, by slight abuse of notation, occasionally  we will write 
$I \in \mathcal{C}_{n}$ for a SB--interval $I \in \mathcal{T}_{n}$  
which is a subset of $\mathcal{C}_{n}$. 

\subsection{$\pmb{\mathcal{C}_{n}}$ in terms of  Stern--Brocot coding} 
There is also a way of expressing the sequence
$\left(\mathcal{C}_{n}\right)$
in terms of the maps $\alpha,\beta:  \mathcal{C}_{0} \to \mathcal{C}_{0}$
given by
\[ \alpha(x):=  x/(1+x) \mbox{ and } \beta\left(x\right):=
1/(2-x).\]
 It is well known that the orbit of the unit interval under the free
semi-group generated by
$\alpha$ and $\beta$ is in 1--1 correspondence
to the set of SB-intervals. 
In fact, by associating the symbol $A$
to the map $\alpha$ and the symbol $B$ to the map $\beta,$
one obtains that each SB-interval (with the exception the SB-interval
of order $0$) is associated with a unique word made of letters from
the alphabet $\{A,B\},$ and vice versa. We will refer to this coding
as the Stern--Brocot coding, and will write $I\cong W$ if $I$ is
the SB-interval whose Stern--Brocot code is given by $W\in\{A,B\}^{k}$,
for some $k\in\N$. The reader might like to recall that there is
a dictionary  which translates between Stern--Brocot intervals and 
continued fraction cylinder sets
$[\hspace{-.6mm}[a_{1},\ldots,a_{n}]\hspace{-.6mm}]:=\{[ x_{1},x_{2}, \ldots 
   ]: x_{k}=a_{k}, k=1,\ldots,n\}$, which reads as follows. 
For $\{X,Y\}=\{U,V\}=\{A,B\},$
we have 
\[ X^{a_{1}}Y^{a_{2}}X^{a_{3}}\cdots U^{a_{k}}V\cong
\left\{ \begin{array}{lll}
[\hspace{-.6mm}[a_{1}+1,a_{2},a_{3},\ldots,a_{k}]\hspace{-.6mm}] & \,\,\textrm{for} & X=A\\
{}[\hspace{-.6mm}[1,a_{1},a_{2},\ldots,a_{k}]\hspace{-.6mm}] & \,\,\textrm{for} & X=B.\end{array}\right.\]
 By using this dictionary, it is not hard to see that for $n\geq2$
we have 
\[
\mathcal{C}_{n}=\{I\in \mathcal{T}_{n}:I\cong WXY\mbox{ for $\{X,Y\}=\{A,B\}$ and $W\in\{A,B\}^{n-2}$}\}.\]
 To illustrate this way of viewing $\mathcal{C}_{n}$, we list
the first members of this sequence of code words: \begin{eqnarray*}
\mathcal{C}_{1}: & B\\
\mathcal{C}_{2}: & AB\,\,\,\, BA\\
\mathcal{C}_{3}: & AAB\,\,\,\, ABA\,\,\,\, BAB\,\,\,\, BBA\\
\mathcal{C}_{4}: & AAAB\,\,\,\, AABA\,\,\,\, ABAB\,\,\,\, ABBA\,\,\,\, BAAB\,\,\,\, BABA\,\,\,\, BBAB\,\,\,\, BBBA\\
 & \vdots\end{eqnarray*}
 \subsection{$\pmb{\mathcal{C}_{n}}$ in terms of the Farey map} 
 The sequence
 $\left(\mathcal{C}_{n}\right)$ can also be expressed
 with the help of the Farey map $T:\mathcal{C}_{0}\rightarrow\mathcal{C}_{0}$.
 For this, recall that $T$ is given by\[
 T\left(x\right):=\left\{ \begin{array}{lll}
 x/(1-x) & \,\,\textrm{for} & x\in\left[0,1/2\right]\\
 (1-x)/x & \,\,\textrm{for} & x\in\left(1/2,1\right],\end{array}\right.\]
  and that the inverse branches of $T$ are given by\[
 u_{0}\left(x\right):=x/(1+x)\mbox{ and }u_{1}\left(x\right):=
 1/(1+x).\]
The associated Markov partition is then given by $\{L,R\}$, 
where $L:= 
\mathcal{C}_{0} \setminus \mathcal{C}_{1}$ and 
$R:=\mathcal{C}_{1}$,
and each irrational number in $\mathcal{C}_{0}$ has a
Markov coding 
$x=\langle x_{1}, x_{2},\ldots \rangle \in \{L,R\}^{\N}$, 
given by $T^{k-1}(x)\in x_{k}$
for all $k \in \N$.
This coding will be referred to
 as the Farey coding, and will write $I \triangleq W$ if $I$ is
 the SB-interval whose Farey code is given by $W\in\{L,R\}^{k}$,
 for some $k\in\N$. The  dictionary  which translates between Farey codes and 
 continued fraction cylinders reads as follows:
 \[L^{a_{1}-1}RL^{a_{2}-1}RL^{a_{3}-1} \cdots L^{a_{k}-1}R \triangleq
 [\hspace{-.6mm}[a_{1},a_{2},a_{3},\ldots,a_{k}]\hspace{-.6mm}] .
 \]
  By using this dictionary, it is not hard to 
  see that we have, for each $n\in \N$,
   \[
 \mathcal{C}_{n}=\{I\in \mathcal{T}_{n}: I \triangleq  WR\mbox{ for 
  $W\in\{L,R\}^{n-1}$}\}.\]
  Again, let us list
 the first members of this sequence of code words: \begin{eqnarray*}
 \mathcal{C}_{1}: & R\\
 \mathcal{C}_{2}: & LR\,\,\,\, RR\\
 \mathcal{C}_{3}: & LLR\,\,\,\, LRR\,\,\,\, RLR\,\,\,\, RRR\\
 \mathcal{C}_{4}: & LLLR\,\,\,\, LLRR\,\,\,\, LRRR\,\,\,\, LRLR\,\,\,\, 
 RRLR\,\,\,\, RRRR\,\,\,\, RLRR\,\,\,\, RLLR\\
  & \vdots\end{eqnarray*}
  \\ 
  The crucial link between the sequence of sum-level sets and the 
  Farey map is now given by the following lemma.
  \begin{lem}\label{Farey}
      For all $n\in\N$, we have that 
      \[T^{-(n-1)}(\mathcal{C}_{1})=\mathcal{C}_{n}.\]
      \end{lem}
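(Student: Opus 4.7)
My plan is to exploit the Farey coding description of $\mathcal{C}_n$ given immediately before the lemma. Recall from there that $\mathcal{C}_1 = R$ is one of the two atoms of the Markov partition $\{L,R\}$ of the Farey map, and that for $n\ge 2$ the set $\mathcal{C}_n$ is precisely the union of those SB-intervals of order $n$ whose Farey code $W\in\{L,R\}^n$ ends with the letter $R$. So the statement $T^{-(n-1)}(\mathcal{C}_1)=\mathcal{C}_n$ will reduce to the assertion that applying $T^{n-1}$ to a point in an SB-interval of order $n$ reads off the last symbol of its Farey code.

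Concretely, I would first verify that an SB-interval $I\triangleq W$ with $W=w_1w_2\cdots w_n\in\{L,R\}^n$ can be written as $I=u_{w_1}\circ u_{w_2}\circ\cdots\circ u_{w_n}(\mathcal{C}_0)$, where $u_L:=u_0$ and $u_R:=u_1$ are the two inverse branches of $T$. This is just the standard identification of Markov cylinders for $T$ with compositions of inverse branches, and one can prove it by a short induction on $n$ using that the level-$n$ SB-intervals form the partition generated by $T^{-(n-1)}(\{L,R\})$ under the Markov map $T$. From this representation and the identity $T\circ u_{w_1}=\mathrm{id}$ on the appropriate domain, one immediately gets $T^{n-1}(I)=u_{w_n}(\mathcal{C}_0)=w_n$, with $T^{n-1}|_I$ a homeomorphism onto the atom $w_n$.

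With this lemma in hand the conclusion is essentially one line: for any $x\in\mathcal{C}_0$ whose Farey code begins $x_1x_2\cdots$, the point $x$ lies in $T^{-(n-1)}(\mathcal{C}_1)=T^{-(n-1)}(R)$ iff $T^{n-1}(x)\in R$, iff $x_n=R$, iff the SB-interval of order $n$ containing $x$ has code ending in $R$, iff $x\in\mathcal{C}_n$. The only tiny nuisance is the dyadic rational boundary points of the SB-partitions, whose Farey coding is either ambiguous or finite; since there are countably many such points and both sides of the claimed equality are (closed) unions of SB-intervals, I would dispose of them by checking directly that the two sides share the same endpoints, or simply note that it suffices to prove equality on the full-measure set of irrationals.

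The main (and really only) obstacle is therefore bookkeeping: making sure the Farey coding of SB-intervals of order $n$ lines up letter-by-letter with the itinerary under $T$, so that "$n$-th letter equals $R$" and "$T^{n-1}(x)\in R$" are literally the same condition. Once the compositional description $I=u_{w_1}\cdots u_{w_n}(\mathcal{C}_0)$ is established, everything else is tautological.
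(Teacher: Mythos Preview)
Your proposal is correct, and it takes a genuinely different route from the paper's own proof. The paper argues by induction using the \emph{continued fraction} definition of $\mathcal{C}_n$: it checks $T^{-1}(\mathcal{C}_1)=\mathcal{C}_2$ directly, and then for the inductive step computes that the two preimages of $x=[a_1,a_2,\ldots]$ under $T$ are $[1,a_1,a_2,\ldots]$ and $[a_1+1,a_2,\ldots]$, so that $\sum_{i=1}^{\ell}a_i=n$ forces both preimages to lie in $\mathcal{C}_{n+1}$; the reverse inclusion is obtained by a counting argument on SB--intervals. You instead use the \emph{Farey coding} description of $\mathcal{C}_n$ stated just before the lemma, observing that since the itinerary is defined by $T^{k-1}(x)\in x_k$ and $R=\mathcal{C}_1$, the condition $T^{n-1}(x)\in\mathcal{C}_1$ is literally the condition $x_n=R$, which is exactly the characterisation of $\mathcal{C}_n$.

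Your approach is shorter and more conceptual: once the paper has set up the Farey coding and written down $\mathcal{C}_n=\{I\in\mathcal{T}_n:I\triangleq WR,\ W\in\{L,R\}^{n-1}\}$, the lemma becomes a tautology about Markov partitions, and you rightly note that the only residual issue is the countably many boundary rationals, which is harmless since both sides are finite unions of nondegenerate closed intervals agreeing on the irrationals. The paper's approach, on the other hand, is self-contained at the level of continued fraction arithmetic and does not rely on having already established the Farey-coding description of $\mathcal{C}_n$; it also makes the action of the inverse branches on continued fraction digits explicit, which is useful elsewhere. Either argument is perfectly adequate here.
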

      \begin{proof}
By computing the images of $\mathcal{C}_{1}$ under $u_{0}$
	  and $u_{1}$, one immediately verifies that 
	  $T^{-1}(\mathcal{C}_{1})=\mathcal{C}_{2}$.
We then proceed by way of induction as follows.
Assume that for some $n \in \N$ we have that
$T^{-(n-1)}(\mathcal{C}_{1})=\mathcal{C}_{n}$.
Since $T^{-n}(\mathcal{C}_{1})=
T^{-1}(T^{-(n-1)}(\mathcal{C}_{1}))= T^{-1}( \mathcal{C}_{n})$, it 
is then sufficient to show that $T^{-1}(\mathcal{C}_{n})=
\mathcal{C}_{n+1}$. For this, let $x=[a_{1},a_{2},\ldots] \in 
\mathcal{C}_{n}$
be given. Then there exists $\ell \in \N$ such that
$x \in [\hspace{-.6mm}[a_{1},\ldots
,a_{\ell}]\hspace{-.6mm}]$ and $\sum_{i=1}^{\ell} 
a_{i} =n$. By computing the images of $x$ under $u_{0}$
and $u_{1}$, one immediately obtains that $T^{-1}(x) =\{[1,a_{1},a_{2},\ldots],
[a_{1}+1,a_{2},\ldots]\}$. Clearly, since $1+\sum_{i=1}^{\ell} 
a_{i} =(a_{1}+1)+ \sum_{i=2}^{\ell} 
a_{i} =n+1$, this shows that $T^{-1}(x)  \subset \mathcal{C}_{n+1}$, 
and hence, $T^{-1}(\mathcal{C}_{n})\subset
\mathcal{C}_{n+1}$.  The reverse inclusion 
$\mathcal{C}_{n+1} \subset T^{-1}(\mathcal{C}_{n})$ follows for instance  by counting the SB--intervals in $ \mathcal{C}_{n+1}$ and using the dictionary translating between SB--intervals  and 
 continued fraction cylinder sets.
\end{proof}
\subsection{Elementary ergodic theory for the Farey map} 
 For later use we now recall a few elementary facts and results from infinite
ergodic theory for the Farey map. It is well known that
the infinite Farey system $\left(\mathcal{C}_{0},T,\mathcal{A},
\mu\right)$
is a conservative ergodic measure preserving dynamical system. Here,
$\mathcal{A}$ refers to the Borel $\sigma$-algebra of $\mathcal{C}_{0}$,
and the measure $\mu$ is the infinite $\sigma$-finite $T$-invariant
measure absolutely continuous with respect to the Lebesgue measure
$\lambda$. In
fact, with $\phi_{0}:\mathcal{C}_{0}\to\mathcal{C}_{0}$ defined by
$\phi_{0}(x):=x$, it is well known that  $\mu$ is explicitly given 
by  (see e.g. \cite{dan}, \cite{par1} , \cite{par2})\[
\mathrm{d}\lambda=\phi_{0}\,\mathrm{d}\mu.\]
Recall that conservative and ergodic  means that  $\sum_{n\geq0}\hat{T}^{n}
\left(f\right)=\infty$, $\mu$-almost everywhere and for
all $f\in L_{1}^{+}\left(\mu\right):=\left\{ f\in L_{1}
\left(\mu\right):\; f\geq0\; 
\mathrm{and}\;\mu(f\cdot 
\1_{\mathcal{C}_{0}})>0\right\} $.
Here, $\1_{\mathcal{C}_{0}}$ refers to the characteristic function
of $\mathcal{C}_{0}$. Also,  invariance of $\mu$ under $T$ means
$\hat{T}\left(\1_{\mathcal{C}_{0}}\right)=\1_{\mathcal{C}_{0}}$,
where $\hat{T}:L_{1}\left(\mu\right)\to L_{1}\left(\mu\right)$ denotes the transfer operator 
associated with the infinite dynamical Farey system, which 
is a positive
linear operator,
given by \[
\mu\left(\1_{C}\cdot\hat{T}\left(f\right)\right)=\mu\left(\1_{T^{-1}\left(C\right)}\cdot f\right),\mbox{ for all }f\in L_{1}\left(\mu\right),C\in\mathcal{A}.\]
 Finally, note that the Perron--Frobenius operator $\mathcal{L}:L_{1}\left(\mu\right)\to L_{1}\left(\mu\right)$
of the Farey system is given by \[
\mathcal{L}\left(f\right)=\left|u_{0}'\right|\cdot(f\circ u_{0})+\left|u_{1}'\right|\cdot(f\circ u_{1}),\mbox{ for all }f\in L_{1}\left(\mu\right).\]
 One then immediately verifies that the two operators $\hat{T}$ and
$\mathcal{L}$ are related as follows: \[
\hat{T}\left(f\right)=\phi_{0}\cdot\mathcal{L}\left(f/\phi_{0}\right),\mbox{ for all }f\in L_{1}\left(\mu\right).\]

\begin{rem}
\label{remC} Let us remark that $\left(\mathcal{C}_{n}\right)$
has the following topological self-similarity property. Note that the set of SB--intervals of order $2$ consists of four
SB--intervals, that is, a pair of adjacent intervals in the middle 
whose union is equal to
$\mathcal{C}_{2}$, and two surrounding intervals, one to the left and
the other to the right of this pair, where the union of the latter
two is equal to $\mathcal{C}_{2}^{c}$. This structure of how the
intervals of $\mathcal{C}_{2}$ and $\mathcal{C}_{2}^{c}$ appear
in the set of SB--intervals of order $2$ serves as the building
block  for the topological structure of the appearance of the intervals
in $\mathcal{C}_{n}\cup\mathcal{C}_{n}^{c}$ in general. Namely, for
$n>2$, the set of SB--intervals of order $n$ consists of $2^{n}$
SB--intervals which are grouped into $2^{n-2}$ blocks of four adjacent
intervals. The appearance of the intervals in each of these blocks
looks topologically like a scaled down version of the building block
at $n=2$ (see Figure \ref{fig:first-level-sum-sets}). This point of view will be useful in the proof of Lemma
\ref{Lemma0} below, where we will employ a finite inductive process
in order to locate a certain subset of $\mathcal{C}_{n}^{c}$. 
\end{rem}

\section{Proof(s) of Theorem \ref{thm1}}
In this section we give two alternative proofs of  Theorem \ref{thm1}.
The first of these is more elementary, whereas the second uses exactness 
of $T$ and
a criterion for exactness due to Lin. 
\subsection{First Proof of Theorem \ref{thm1}}
The following lemma gives the first step in our first proof of Theorem \ref{thm1}.
Note that the statement of this lemma has already been obtained in
\cite{FK}, where it was the main result. Nevertheless, in order
to keep the paper as self-contained as possible, we give a short proof
of this result. 
\begin{lem}
\label{Lemma0} \[
\liminf_{n\to\infty} \, \lambda(\mathcal{C}_{n})=0.\]
 \end{lem}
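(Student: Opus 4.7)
The plan is to combine Lemma \ref{Farey} with the von Neumann mean ergodic theorem, applied to the Koopman operator of $T$ on $L^{2}(\mu)$. By Lemma \ref{Farey}, $\lambda(\mathcal{C}_{n})=\int \1_{\mathcal{C}_{1}}\circ T^{n-1}\,d\lambda$, and using the identity $d\lambda=\phi_{0}\,d\mu$ one rewrites this Lebesgue measure as the $L^{2}(\mu)$-pairing
\[
\lambda(\mathcal{C}_{n}) \;=\; \bigl\langle U_{T}^{n-1}\1_{\mathcal{C}_{1}},\; \phi_{0}\bigr\rangle_{\mu},
\]
where $U_{T}f:=f\circ T$. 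Both vectors sit comfortably in $L^{2}(\mu)$: $\|\1_{\mathcal{C}_{1}}\|_{L^{2}(\mu)}^{2}=\mu(\mathcal{C}_{1})=\log 2$, and $\|\phi_{0}\|_{L^{2}(\mu)}^{2}=\int_{0}^{1}x^{2}\cdot x^{-1}\,dx=1/2$.

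Next I would take Ces\`aro averages in $n$ and invoke the mean ergodic theorem. Since $T$ preserves $\mu$, $U_{T}$ is an $L^{2}(\mu)$-isometry and in particular a contraction, so von Neumann's theorem yields strong $L^{2}(\mu)$-convergence
\[
\frac{1}{N}\sum_{n=1}^{N} U_{T}^{n-1}\1_{\mathcal{C}_{1}}\;\longrightarrow\;P\1_{\mathcal{C}_{1}}\quad\text{in }L^{2}(\mu),
\]
where $P$ is the orthogonal projection onto $\ker(I-U_{T})\cap L^{2}(\mu)$. By ergodicity of $T$, any $U_{T}$-fixed measurable function is constant $\mu$-a.e.; since $\mu(\mathcal{C}_{0})=\infty$ the only such constant lying in $L^{2}(\mu)$ is $0$. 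Hence $P=0$, and Cauchy--Schwarz gives
\[
\frac{1}{N}\sum_{n=1}^{N}\lambda(\mathcal{C}_{n})\;=\;\Bigl\langle \tfrac{1}{N}\sum_{n=1}^{N}U_{T}^{n-1}\1_{\mathcal{C}_{1}},\,\phi_{0}\Bigr\rangle_{\mu}\;\le\;\|\phi_{0}\|_{L^{2}(\mu)}\cdot\Bigl\|\tfrac{1}{N}\sum_{n=1}^{N}U_{T}^{n-1}\1_{\mathcal{C}_{1}}\Bigr\|_{L^{2}(\mu)}\;\longrightarrow\;0.
\]

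Because $\lambda(\mathcal{C}_{n})\ge 0$, Ces\`aro convergence of the sequence to zero is already enough: if one had $\liminf_{n}\lambda(\mathcal{C}_{n})=c>0$, then $\lambda(\mathcal{C}_{n})\ge c/2$ for all sufficiently large $n$, which would pin the Ces\`aro mean from below by $c/4$ in the limit, contradicting what we just established. The only delicate ingredient in the argument is the identification $\ker(I-U_{T})\cap L^{2}(\mu)=\{0\}$; this is where the infinitude of $\mu$ truly enters, reflecting the escape of mass to the indifferent fixed point of the Farey map at $0$. Everything else is just Hilbert space bookkeeping, which is why this first proof is only ``mildly spiced'' with infinite ergodic theory.
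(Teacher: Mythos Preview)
Your argument is correct, but it is a genuinely different proof from the one in the paper. The paper's proof of this lemma is entirely combinatorial and makes no use of ergodic theory: for each $k<n-1$ and each Stern--Brocot interval $I\in\mathcal{C}_{k}$, it locates two specific descendants $I_{A,n-k},I_{B,n-k}\subset I\cap\mathcal{C}_{n}^{c}$ with $\lambda(I)\asymp(n-k)\lambda(I_{X,n-k})$, checks a disjointness statement (up to a parity restriction), and deduces $\sum_{k=2}^{n-2}(n-k)^{-1}\lambda(\mathcal{C}_{k})\ll\lambda(\mathcal{C}_{n}^{c})\le 1$. Assuming $\liminf_{n}\lambda(\mathcal{C}_{n})=\kappa>0$ then forces $\kappa\log n\ll 1$, a contradiction.

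Your route via the mean ergodic theorem is shorter and more conceptual: once one knows Lemma~\ref{Farey}, ergodicity of $(T,\mu)$, and $\mu(\mathcal{C}_{0})=\infty$, the vanishing of the Ces\`aro averages is immediate Hilbert space analysis, and $\liminf=0$ follows. The trade-off is that the paper's combinatorial argument is fully self-contained at the level of Stern--Brocot intervals and does not invoke any dynamical input at all, whereas yours leans on the ergodic-theoretic facts recalled in Section~\ref{subsec:Farey}. One small remark on your closing sentence: in the paper the phrase ``mildly spiced with infinite ergodic theory'' refers to the \emph{full} first proof of Theorem~\ref{thm1}, where the ergodic spice enters through Lemma~\ref{lem:decrease}; the paper's proof of Lemma~\ref{Lemma0} itself is purely elementary, so your version actually shifts the ergodic input earlier than the paper does.
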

\begin{proof}
Let $n\in\N$ be fixed such that $n>3$, and let $k\in\{2,\ldots,n-2\}$
be arbitrary. Recall that the set of SB--intervals of order $k$ consists
of $2^{k-2}$ blocks of four adjacent SB--intervals (see Remark \ref{remC}).
Now, let $I\subset{\mathcal{C}}_{k}$ be a SB--interval of order $k$
such that $I\cong W\in\{A,B\}^{k}$. We then have that $I$ contains
the interval $I_{A,n-k}\cong WA^{n-k}$ as well as the interval $I_{B,n-k}\cong WB^{n-k}$.
Note that $I_{A,n-k}$ and $I_{B,n-k}$ are two distinct SB--intervals
of order $n$ which are both contained in ${\mathcal{C}}_{n}^{c}$.
Also, it is well known (see e.g. \cite{KesseboehmerStratmann:04B})
that in this situation we have, where $a_{n}\asymp b_{n}$ means that
the quotient $a_{n}/b_{n}$ is uniformly bounded away from zero and
infinity, \[
\lambda(I)\asymp(n-k)\lambda(I_{X,n-k}),\mbox{ for each }X\in\{A,B\}.\]
 Clearly, $I_{X,n-k}\cap J_{Y,n-k}=\emptyset$, for all $X\in\{A,B\}$,
$I,J\in{\mathcal{C}}_{k}$ ($I\neq J$). Moreover, by construction,
we have for each $k,l\in\{2,\ldots,n-2\}$ such that $k\neq l$ and
such that either $k$ and $l$ are both odd or both even, \[
I_{X,n-k}\cap J_{Y,n-l}=\emptyset,\mbox{ for all }I\in{\mathcal{C}}_{k},J\in{\mathcal{C}}_{l},X\in\{A,B\}.\]
 Note that in here we require that $k$ and $l$ are both odd or both
even, since for instance for the interval $I\in{\mathcal{C}}_{2}$
for which $I\cong AB$ and the interval $J\in{\mathcal{C}}_{3}$ for
which $J\cong ABA$ we have that $I_{A,n-2}=J_{A,n-3}$. Also, note
that we require $k<n-1$, since for instance for the interval $I\in{\mathcal{C}}_{n-1}$
for which $I\cong WB$ we have that $I_{A,1}\notin{\mathcal{C}}_{n}^{c}$.
It now follows that for each $k<n-1$ we have \[
\frac{1}{n-k}\lambda(\mathcal{C}_{k})=\sum_{I\in{\mathcal{C}}_{k}}\frac{1}{n-k}\lambda(I)\asymp\sum_{I\in{\mathcal{C}}_{k}}\sum_{X\in\{A,B\}}\lambda(I_{X,n-k}).\]
 Combining these observations, we obtain that \[
\sum_{{k=2}}^{n-2}\frac{1}{n-k}\lambda(\mathcal{C}_{k})\asymp\sum_{k=2}^{n-2}\sum_{I\in{\mathcal{C}}_{k}}\sum_{X\in\{A,B\}}\lambda(I_{X,n-k})\leq2\lambda(\mathcal{C}_{n}^{c}).\]
 To finish the proof, let us assume by way of contradiction that $\liminf_{n\to\infty}\lambda(\mathcal{C}_{n})=\kappa>0$.
By the above, we then have that \[
1\geq\lambda(\mathcal{C}_{n}^{c})\gg
\sum_{{k=2}}^{n-2}\frac{1}{n-k}\lambda(\mathcal{C}_{k})\gg
\kappa\sum_{{k=2}}^{n-1}\frac{1}{k}\gg\log n,\mbox
{ for all }n\in\N,\]
where $a_{n}\gg b_{n}$ means that
the quotient $a_{n}/b_{n}$ is uniformly bounded away from zero.
 This gives a contradiction, and hence finishes the proof. 
\end{proof}

\noindent For the first proof of Theorem \ref{thm1} we also require the
following lemma. For this, the reader might like to recall from 
Section \ref{subsec:Farey} that
the function $\phi_{0}:\mathcal{C}_{0}\to\mathcal{C}_{0}$ is given
by $\phi_{0}(x):=x$. 
\begin{lem}
\label{lem:decrease} On $\mathcal{C}_{1}$ we have \[
\hat{T}^{n}\phi_{0}<\hat{T}^{n-1}\phi_{0},\mbox{ {\it for all} }n\in\N.\]
 \end{lem}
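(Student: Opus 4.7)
The plan is to proceed by strong induction on $n$, using an iterated expansion of $\hat T^n\phi_0$ along the $u_0$-branch of the Farey map.

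For the base case $n=1$ I would apply the identity $\hat T f = \phi_0 \cdot \mathcal L(f/\phi_0)$ to $f = \phi_0$ to obtain $\hat T\phi_0(x) = x\cdot \mathcal L(1)(x) = 2x/(1+x)^2$. On $\mathcal C_1 = [1/2,1]$ one has $(1+x)^2 \geq 9/4 > 2$, hence $\hat T\phi_0(x) < x = \phi_0(x)$, as required.

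For the inductive step, from the explicit formula for $\mathcal L$ and the relation $\hat T f = \phi_0\cdot\mathcal L(f/\phi_0)$ one derives the convenient recursion
\[ \hat T g(x) = \frac{1}{1+x}\, g(u_0(x)) + \frac{x}{1+x}\, g(u_1(x)),\qquad x \in \mathcal C_0. \]
Iterating this recursion along the $u_0$-orbit $u_0^j(x) = x/(1+jx)$ and using the telescoping identity $\prod_{i=0}^{j-1}(1+u_0^i(x))^{-1} = (1+jx)^{-1}$, one obtains the expansion
\[ \hat T^n\phi_0(x) = \sum_{j=0}^{n-1} \frac{x}{(1+jx)(1+(j+1)x)}\, \hat T^{n-1-j}\phi_0(y_j(x)) + \frac{x}{(1+nx)^2}, \]
where $y_j(x):=u_1 u_0^j(x) = (1+jx)/(1+(j+1)x)$. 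The coefficients in this expansion form a partition of unity, and crucially $y_j(x) \in \mathcal C_1$ whenever $x \in \mathcal C_1$, so that the inductive hypothesis applies at each $y_j(x)$. Subtracting the analogous formula for $\hat T^{n-1}\phi_0(x)$ exhibits $\hat T^n\phi_0 - \hat T^{n-1}\phi_0$ as a sum of terms of the form $\alpha_j(x)[\hat T^{n-1-j}\phi_0 - \hat T^{n-2-j}\phi_0](y_j(x))$, each strictly negative by induction, plus a residual boundary contribution coming from the endpoint of the expansion at $u_0^{n-1}(x)$.

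The main obstacle is this residual term: since $u_0^{n-1}(x)\to 0$ drifts into a neighbourhood of the indifferent fixed point where $\hat T\phi_0 > \phi_0$, the residual need not be nonpositive on its own, and one must show that the strictly negative contribution of the inductive sum dominates it. I expect this step to require quantitative control beyond the qualitative sign delivered by induction, either through a strengthened inductive hypothesis carrying an explicit lower bound for $|\hat T^m\phi_0 - \hat T^{m-1}\phi_0|$ on $\mathcal C_1$, or through an algebraic cancellation between the last summand $\alpha_{n-1}(x)\phi_0(y_{n-1}(x))$ and the residual contribution that makes the overall sign manifest.
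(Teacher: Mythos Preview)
Your proposal is incomplete, and you correctly identify where: the residual boundary term is the problem. To be concrete, after combining the $j=n-1$ summand $\alpha_{n-1}(x)\phi_0(y_{n-1}(x))=x/(1+nx)^2$ with the two endpoint contributions, the residual equals
\[
\frac{2x}{(1+nx)^{2}}-\frac{x}{(1+(n-1)x)^{2}},
\]
which is \emph{strictly positive} for all $n\geq 2$ on a nontrivial subinterval of $\mathcal{C}_{1}$ (for instance, at $x=1/2$, $n=2$ it equals $1/36$). So the qualitative sign delivered by the inductive hypothesis on the sum $\sum_{j=0}^{n-2}\alpha_j(x)\bigl[\hat T^{n-1-j}\phi_0-\hat T^{n-2-j}\phi_0\bigr](y_j(x))$ is not enough; you would need a quantitative lower bound on $|\hat T^{m}\phi_0-\hat T^{m-1}\phi_0|$ on $\mathcal{C}_{1}$, and it is not clear what strengthened hypothesis would propagate cleanly through the induction. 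This is a real gap, not a routine detail.

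The paper's argument bypasses this difficulty entirely by a structural observation you do not exploit: the operator $\hat T$ preserves the cone $\mathcal{D}=\{g\in C^{2}([0,1]):g'\geq 0,\ g''\leq 0\}$ of increasing concave functions, and one has the exact identity $\hat T f(1)=f(1/2)$ (since $u_{0}(1)=u_{1}(1)=1/2$). Because $\phi_0\in\mathcal{D}$, every iterate $\hat T^{n}\phi_0$ is increasing on $[0,1]$, and hence on $\mathcal{C}_{1}$ one has the chain
\[
\hat T^{n}\phi_0(x)\ \leq\ \hat T^{n}\phi_0(1)\ =\ \hat T^{n-1}\phi_0(1/2)\ \leq\ \hat T^{n-1}\phi_0(x).
\]
This two-line argument replaces your inductive expansion entirely; the key missing idea in your approach is the monotonicity of the iterates, which converts a delicate sign balance into a direct comparison at the endpoints of $\mathcal{C}_{1}$.
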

\begin{proof}
Recall that $\hat{T}g=\phi_{0}\cdot\mathcal{L}(g/\phi_{0})$, where
$\mathcal{L}(g)=\sum_{i=0}^{1}\left(\left|(T^{-1})'\right|\cdot(g\circ u_{i})\right)$,
that is, \[
\widehat{T}g\left(x\right)=
\frac{g\left(u_{0}(x)\right)+x\cdot g\left(u_{1}(x)\right)}{1+x}.
\]
 By \cite[Lemma 3.2]{KesseboehmerSlassi:08} it follows that for $\mathcal{D}:=\left\{ g\in C^{2}\left(\left[0,1\right]\right):g'\geq0,g''\leq0\:\right\} $
we have $\widehat{T}\left(\mathcal{D}\right)\subset\mathcal{D}$.
The latter displayed formula in particular also shows that $f\left(1/2\right)=\widehat{T}f(1)$.
Moreover, one immediately verifies that $\phi_{0}\in\mathcal{D}$. Hence,
for all $x\in\mathcal{C}_{1}$ we have \begin{eqnarray*}
\hat{T}^{n}\phi_{0}(x) & \leq & \max\left\{ \hat{T}^{n}\phi_{0}(x):x\in\mathcal{C}_{1}\right\} =\hat{T}^{n}\phi_{0}(1)=\hat{T}^{n-1}\phi_{0}\left(1/2\right)\\
 & = & \min\left\{ \hat{T}^{n-1}\phi_{0}(x):x\in\mathcal{C}_{1}\right\} \leq\hat{T}^{n-1}\phi_{0}\left(x\right).\end{eqnarray*}

\end{proof}
~ 
\begin{proof}
[First proof of Theorem \ref{thm1}] Using  Lemma \ref{Farey}, 
Lemma \ref{lem:decrease},
the $T$-invariance of $\mu$, 
and the fact that $\mathrm{d}\lambda=\phi_{0}\cdot\mathrm{d}\mu$,
we obtain \begin{eqnarray*}
\lambda(\mathcal{C}_{n+1}) & = & \mu\left(\1_{\mathcal{C}_{n+1}}\cdot\phi_{0}\right)=\mu\left(\1_{T^{-n}(\mathcal{C}_{1})}\cdot\phi_{0}\right)=\mu\left(\1_{\mathcal{C}_{1}}\cdot\hat{T}^{n}(\phi_{0})\right)\\
 & < & \mu\left(\1_{\mathcal{C}_{1}}\cdot\hat{T}^{n-1}(\phi_{0})\right)=\mu\left(\1_{\mathcal{C}_{n}}\cdot\phi_{0}\right)=\lambda(\mathcal{C}_{n}).\end{eqnarray*}
 Hence, the sequence $\left(\lambda(\mathcal{C}_{n})\right)$
is strictly decreasing. Combining this fact with Lemma \ref{Lemma0},
our first proof of Theorem \ref{thm1} is complete. 
\end{proof}

\subsection{Second Proof of Theorem \ref{thm1}}

For the second proof of 
 Theorem \ref{thm1} recall that a non-singular transformation $S$
 of the $\sigma$--finite measure space $\left(\mathcal{C}_{0}, 
 \mathcal{A}, m\right)$ 
is called exact if and only if for each element $A$ of the tail 
$\sigma$-algebra $\bigcap_{n\in \N} S^{-n}\left(\mathcal{A}\right)$
we have that $m(A) \cdot m(A^{c})=0$. Crucial for us here will be
    a result of Lin \cite{Lin} which gives a necessary and sufficient 
    condition for exactness of $S$ in terms the dual $\hat{S}$ of $S$.
  More precisely,   Lin found  that $S$ is exact if and only if 
    \[ \lim_{n  \to \infty} \|\hat{S}^{n}(f)\|_{1} = 0, \mbox{  
   for all $ f \in L_{1}(m)$ such that $ m(f)=0$}.\]
  We begin with by showing that the infinite Farey system
  $\left(\mathcal{C}_{0},T,\mathcal{A},
\mu\right)$
is exact.  
Let us remark that 
this fact is probably well known to experts in
 the field of infinite ergodic theory of numbers. 
 Nevertheless, we were unable to locate a rigorous 
proof in the literature, and hence decided to give such a proof 
here. However, our proof was inspired by the proof of 
\cite[Theorem 3.2]{ADU}.

\begin{prop}
    The Farey map $T$ of the $\sigma$--finite measure space 
    $\left(\mathcal{C}_{0},\mathcal{A},
    \mu\right)$ is exact.
    \end{prop}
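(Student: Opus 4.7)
The plan is to verify Lin's criterion, i.e.\ to show that $\|\hat{T}^n f\|_1 \to 0$ for every $f \in L_1(\mu)$ with $\mu(f)=0$. Iterating the relation $\hat{T}f = \phi_0 \cdot \mathcal{L}(f/\phi_0)$ yields $\hat{T}^n f = \phi_0 \cdot \mathcal{L}^n(f/\phi_0)$, and since $\mathrm{d}\lambda = \phi_0 \, \mathrm{d}\mu$, Lin's condition translates (setting $g:=f/\phi_0$) into
\[
\int_{\mathcal{C}_0} |\mathcal{L}^n g| \, \mathrm{d}\lambda \longrightarrow 0 \quad \mbox{ for every $g \in L_1(\lambda)$ with $\int g\, \mathrm{d}\lambda = 0$.}
\]
This shifts the problem from the infinite measure $\mu$ to the finite reference measure $\lambda$ and to the Perron--Frobenius operator $\mathcal{L}$.

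By a standard density argument, it is enough to prove this decay for $g$ in a dense subclass of zero-$\lambda$-mean $L_1$-functions. I would take such $g$ to be bounded and supported on some compact set $[\varepsilon,1] \subset (0,1]$, hence bounded away from the indifferent fixed point $0$ of $T$. Such functions can in turn be approximated in $L_1(\lambda)$ by finite linear combinations of characteristic functions of SB--intervals contained in $\mathcal{C}_1$, so it ultimately suffices to prove $\int |\mathcal{L}^n g|\, \mathrm{d}\lambda \to 0$ for the building blocks $g = \1_I - (\lambda(I)/\lambda(J))\1_J$ with $I,J \subset \mathcal{C}_1$ SB--intervals.

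The central tool is the first-return map $T_{\mathcal{C}_1}$ of $T$ to $\mathcal{C}_1$, which is a uniformly expanding Markov map essentially conjugate to the Gauss map $\mathfrak{g}$. Consequently, its Perron--Frobenius operator $\mathcal{L}_{\mathcal{C}_1}$ enjoys a spectral gap on a suitable Banach space of regular functions on $\mathcal{C}_1$; in particular, $\|\mathcal{L}_{\mathcal{C}_1}^{\,k} g\|_{L_1(\lambda)}$ decays exponentially for every $g$ of zero $\lambda$-mean supported in $\mathcal{C}_1$. I would then relate iterates of $\mathcal{L}$ to those of $\mathcal{L}_{\mathcal{C}_1}$ by decomposing the inverse branches of $T^n$ according to the return pattern of the associated pre-orbits to $\mathcal{C}_1$, invoking bounded distortion for the parabolic inverse branch $u_0$ together with the return-time tail estimates for $T_{\mathcal{C}_1}$.

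The main obstacle is to control the mass that iteration of $\mathcal{L}$ transports towards the indifferent fixed point $0$ via the parabolic branch $u_0$, where contraction is merely polynomial, so that a genuine accumulation of mass near $0$ is to be expected. This is precisely the point handled in the proof of \cite[Theorem~3.2]{ADU}: one combines conservativity and ergodicity of $T$ (which guarantee that mass eventually returns to $\mathcal{C}_1$) with the exponential contraction of $\mathcal{L}_{\mathcal{C}_1}$ on zero-mean functions, thereby upgrading $L_1$-decay on $\mathcal{C}_1$ to global $L_1$-decay via the fact that $\bigcup_{n\ge 0}T^{-n}(\mathcal{C}_1)$ covers $\mathcal{C}_0$ modulo a $\lambda$-null set.
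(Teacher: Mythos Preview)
Your approach diverges from the paper's. You try to verify Lin's criterion (the $L_1$-decay $\|\hat T^{n} f\|_{1}\to 0$ for zero-$\mu$-mean $f$) directly, whereas the paper works with the \emph{definition} of exactness and shows that any tail-measurable set of positive measure has full measure. Concretely, for a tail set $A_{0}=T^{-n}A_{n}$ the paper uses continued-fraction mixing of the Gauss map to compare $m_{\mathfrak g}(A_{0}\mid\text{Farey cylinder of depth }\rho_{n}(x)+1)$ with $m_{\mathfrak g}(A_{\rho_{n}(x)+1})$, invokes the Martingale Convergence Theorem to identify $A_{0}$ (mod null sets) with the set $\Lambda$ on which these masses stay bounded below, checks $\mathfrak g^{-1}\Lambda\subset\Lambda$, and concludes by ergodicity of $\mathfrak g$. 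Lin's criterion is used only \emph{afterwards}, as a consequence of exactness, in the proof of Proposition~\ref{pre-exact}.

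Your sketch has a genuine gap at the decisive step. The translation to $(\lambda,\mathcal L)$ and the density reduction are sound in principle (modulo the slip that SB-intervals contained in $\mathcal C_{1}=[1/2,1]$ cannot approximate functions supported on $[\varepsilon,1]$ with $\varepsilon<1/2$; you need SB-intervals from all of $\mathcal C_{0}$). The real problem is the final paragraph: you correctly name the obstacle---mass drifting towards the indifferent fixed point under the parabolic branch---but then defer it wholesale to \cite[Theorem~3.2]{ADU}. That result does \emph{not} establish $L_{1}$-decay of $\mathcal L^{n} g$; it proves exactness via the tail-$\sigma$-algebra and martingale route, i.e.\ exactly the argument the present paper adapts, so invoking it for your purpose is circular. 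Passing from a spectral gap for the induced operator $\mathcal L_{\mathcal C_{1}}$ to global $L_{1}$-decay of $\mathcal L^{n}$ is not a matter of ``combining conservativity with exponential contraction'': for the Farey map the global decay is only of order $1/\log n$ (this is precisely the content of Theorem~\ref{thm2}), and making your programme rigorous would require operator-renewal machinery considerably beyond what your outline supplies.
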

\begin{proof}
    Let  $A_0	\in\bigcap_{n\in \N}T^{-n}
    \mathcal{A}$ be given such that $m_{{\mathfrak{g}}}(A_0)>0$, where 
    $d m_{{\mathfrak{g}}}(x)=(\log(2)(1+x))^{-1}d\lambda(x)$ denotes the 
    Gauss measure.  Note that, since $\mu$ and $m_{{\mathfrak{g}}}$ are 
    in the same measure class, it is sufficient to show the exactness 
    of $T$ with respect to $m_{{\mathfrak{g}}}$, rather than $\mu$.
    Therefore,  the aim is to show that $m_{{\mathfrak{g}}}(A_0^{c})=0$.
    For this, first note that, since  $A_0	\in\bigcap_{n\in \N}T^{-n}
    \mathcal{A}$,  there exists a sequence $\left(A_{n}\right)_{n\in \N}$ such 
    that $A_{n}\in\mathcal{A}$ and $A_{0}=T^{-n}A_n$, for all $n\in \N$. 
    Clearly,  we then have that $A_{k+m}=T^{k}A_m$, for all $k,m\in \N_0$.  
    For each $x \in \mathcal{C}_{0}$, let $\rho$ be defined by \[
    \rho(x):=\inf\left\{ n\geq 0:T^{n}\left(x\right)\in\mathcal{C}_{1}\right
    \} .\]
    Since $T$ is conservative, we have that $\rho$ is finite, $m_{{\mathfrak{g}}}$-almost everywhere. 
       Define $\rho_{n}:=\sum_{k=0}^{n-1}\rho\circ 
    \left(\mathfrak{g}^k\right)$, and let 
   $\langle\hspace{-.9mm}\langle x_{1},\ldots,x_{n}\rangle\hspace{-.9mm}\rangle:=\{\langle y_{1},y_{2}, \ldots 
   \rangle: y_{k}=x_{k}, k=1,\ldots,n\}$
    denote a cylinder set arising from 
the Farey coding.
 Using  the facts that 
 $m_{{\mathfrak{g}}}$  is $\mathfrak{g}$--invariant and
 of bounded mixing type with respect to  
 ${\mathfrak{g}}$,  we obtain  for 
 $m_{{\mathfrak{g}}}$--almost every 
    $x=\langle x_{1},x_{2}, \ldots \rangle =[a_1,a_2,\ldots]$, 
\begin{eqnarray}
    m_{{\mathfrak{g}}}\left(A_0	|\langle\hspace{-.9mm}\langle x_{1},\ldots,x_{\rho_{n}\left(x\right)+1}
    \rangle\hspace{-.9mm}\rangle
    \right) 
      & = & \frac{m_{{\mathfrak{g}}}\left(A_0 \cap \langle\hspace{-.9mm}\langle x_{1},\ldots,
      x_{\rho_{n}\left(x\right)+1
    }\rangle\hspace{-.9mm}\rangle\right)}{m_{{\mathfrak{g}}}\left(\langle\hspace{-.9mm}\langle x_{1},\ldots,x_{\rho_
    {n}\left(x\right)+1}\rangle\hspace{-.9mm}\rangle\right)} \nonumber\\
      & = & \frac{m_{{\mathfrak{g}}}\left(T^{-(\rho_{n}\left(x\right)+
      1)}A_{\rho_{n}\left(x
    \right)+1}\cap \langle\hspace{-.9mm}\langle 
    x_{1},\ldots,x_{\rho_{n}\left(x\right)+1}\rangle\hspace{-.9mm}\rangle
    \right)}{m_{{\mathfrak{g}}}\left(\langle\hspace{-.9mm}\langle x_{1},\ldots,x_{\rho_{n}\left(x
    \right)+1}\rangle\hspace{-.9mm}\rangle\right)} \nonumber\\
    & = & \frac{m_{{\mathfrak{g}}}\left( {\mathfrak{g}}^{-n} A_{\rho_{n}\left(x
	\right)+1}\cap 
	\langle\hspace{-.9mm}\langle x_{1},\ldots,x_{\rho_{n}\left(x\right)+1}\rangle\hspace{-.9mm}\rangle
	\right)}{m_{{\mathfrak{g}}}\left(\langle\hspace{-.9mm}\langle x_{1},\ldots,x_{\rho_{n}\left(x
	\right)+1}\rangle\hspace{-.9mm}\rangle \right)} \nonumber\\
	  &= & \frac{m_{{\mathfrak{g}}}\left( {\mathfrak{g}}^{-n} A_{\rho_{n}\left(x
	\right)+1}\cap [\hspace{-.6mm}[a_1,\ldots ,a_n]\hspace{-.6mm}]
	\right)}{ m_{{\mathfrak{g}}}\left([\hspace{-.6mm}[a_1,\ldots ,a_n]\hspace{-.6mm}]\right)} \nonumber\\
    & \asymp & \frac{m_{{\mathfrak{g}}}\left({\mathfrak{g}}^{-n}A_{\rho_{n}
    \left(x\right)+1}\right) m_{{\mathfrak{g}}} \left([\hspace{-.6mm}[a_1,\ldots ,a_n]\hspace{-.6mm}]\right)}{ m_{{\mathfrak{g}}} 
    \left([\hspace{-.6mm}[a_1,\ldots ,a_n]\hspace{-.6mm}]\right)}
       = {m_{{\mathfrak{g}}} \left(A_{\rho_{n}
    \left(x\right)+1}\right)}.
    \nonumber \end{eqnarray}
    Also, by the Martingale Convergence Theorem (cf. \cite{DOOB}), we have for 
    $m_{{\mathfrak{g}}}$-almost 
every $x=\langle x_{1},x_{2}, \ldots \rangle$,   \begin{eqnarray}
   \lim_{n\to \infty} m_{{\mathfrak{g}}}\left(A_0 | 
   \langle\hspace{-.9mm}\langle x_{1},\ldots,x_{\rho_{n}\left(x\right)+1}\rangle\hspace{-.9mm}\rangle
    \right)=\1_{A_0}(x).
    \nonumber \end{eqnarray}
Combining the  two latter observations, it follows that $A_0=\Lambda \mod m_{\mathfrak{g}}$, where
 $\Lambda$ is defined by \[\Lambda:=\{x\in \mathcal{C}_0: \liminf_{n} 
 {m_{{\mathfrak{g}}} \left(A_{\rho_{n}
    \left(x\right)+1}\right)}>0  \}.\]
Since, by  assumption,  $m_{{\mathfrak{g}}}(A_0)>0$, we now have that 
$m_{{\mathfrak{g}}}(\Lambda)>0$. Hence, to finish the proof, we are left to show that 
$m_{{\mathfrak{g}}}(\Lambda)=1$. For this recall that    $m_{{\mathfrak{g}}}$ is ergodic 
and $\mathfrak{g}$--invariant. This gives that it is in fact sufficient to show 
that ${\mathfrak{g}}^{-1}\Lambda\subset \Lambda \mod m_{{\mathfrak{g}}}$. In other words, 
in order to complete the proof, we are left to show that $\liminf_n 
m_{{\mathfrak{g}}}(A_{\rho_n({\mathfrak{g}}(x)) +1})>0$ implies 
$\liminf_n m_{{\mathfrak{g}}}(A_{\rho_n(x) +1})>0$.
Since $A_{\rho_{n+1}(x)+1}=A_{\rho(x)+\rho_{n}({\mathfrak{g}} (x))+1}=T^{\rho(x)}A_{\rho_n({\mathfrak{g}}(x)) +1}$,  
this assertion would follow if we establish that  for each 
$\epsilon>0$ and $\ell \in \N$ 
there exists $\kappa>0$ such that for all $B\in \mathcal{A}$ with
$m_{{\mathfrak{g}}}(B)>\epsilon$ we 
have $m_{{\mathfrak{g}}}(T^{\ell}B)>\kappa$. 
Hence, let us assume that $m_{{\mathfrak{g}}}(B)>\epsilon$, and let 
$\alpha_{\ell}$ 
denote the Markov partition for the map $T^{\ell}$. Clearly, 
there are $ 2^{\ell}$ elements in $\alpha_{\ell}$.
This immediately implies that  $m_{{\mathfrak{g}}}(A\cap B)> \epsilon 
2^{-\ell}$, for some $A\in \alpha_{\ell}$. 
Therefore, using the fact  that $T^{\ell}:A\to \mathcal{C}_0$ is bijective and 
the fact that there exists a constant  $c_{0}>0$ such that  $d m_{{\mathfrak{g}}}\circ 
T^{\ell}/d m_{{\mathfrak{g}}}(y) > c_{0}$ for all $y \in A$,  
it follows 
that $m_{{\mathfrak{g}}}(T^{\ell}B)> c_{0} 2^{-\ell}\epsilon$. 
Hence, by setting in the above $\kappa:=c_{0} 2^{-\ell} \epsilon$, the proof follows.
\end{proof}
\begin{prop}\label{pre-exact}
 For each $C \in \mathcal{A}$ with $\mu\left(C\right)<\infty$,
 we have that
 \[ \lim_{n\to \infty} \lambda\left( T^{-n} (C)\right) =0.\]
    \end{prop}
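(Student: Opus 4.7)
The plan is to reduce the claim to the exactness of $T$ via Lin's criterion. First I would rewrite, using $\mathrm{d}\lambda=\phi_{0}\,\mathrm{d}\mu$ and the adjoint property $\mu(\1_{T^{-1}A}\,f)=\mu(\1_{A}\,\hat{T}f)$,
\[
\lambda(T^{-n}C)=\mu\!\left(\1_{T^{-n}C}\cdot\phi_{0}\right)=\mu\!\left(\1_{C}\cdot\hat{T}^{n}\phi_{0}\right).
\]
Since $\phi_{0}\in L_{1}(\mu)$ with $\mu(\phi_{0})=\lambda(\mathcal{C}_{0})=1\ne 0$, Lin's criterion does not apply to $\phi_{0}$ itself. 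So I would pick any $A\in\mathcal{A}$ with $0<\mu(A)<\infty$, set $g:=\1_{A}/\mu(A)$, and observe that $\phi_{0}-g\in L_{1}(\mu)$ has vanishing $\mu$-integral; exactness (just proved) together with Lin's criterion then gives $\|\hat{T}^{n}(\phi_{0}-g)\|_{1}\to 0$, hence
\[
\left|\lambda(T^{-n}C)-\mu(C\cap T^{-n}A)/\mu(A)\right|\le\|\hat{T}^{n}(\phi_{0}-g)\|_{1}\longrightarrow 0.
\]
This reduces the problem to showing that $\mu(C\cap T^{-n}A)\to 0$ whenever $\mu(A),\mu(C)<\infty$, i.e., to verifying the \emph{zero-type} property of $T$.

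For this reduction I would apply Lin's criterion a second time, now to the mean-zero function $\1_{A}/\mu(A)-\1_{A'}/\mu(A')$, to conclude that $\mu(C\cap T^{-n}A)/\mu(A)-\mu(C\cap T^{-n}A')/\mu(A')\to 0$ for all $A,A'$ of positive finite $\mu$-measure. In particular
\[
\ell(C):=\limsup_{n\to\infty}\mu(C\cap T^{-n}A)/\mu(A)
\]
does not depend on $A$. I would then rule out $\ell(C)>0$ by invoking the $\sigma$-finiteness of $\mu$ together with $\mu(\mathcal{C}_{0})=\infty$: decompose $\mathcal{C}_{0}=\bigsqcup_{k\in\N}A_{k}$ with $0<\mu(A_{k})<\infty$, extract a subsequence $(n_{j})$ realising $\mu(C\cap T^{-n_{j}}A_{1})/\mu(A_{1})\to\ell(C)$, note that consequently $\mu(C\cap T^{-n_{j}}A_{k})/\mu(A_{k})\to\ell(C)$ for every fixed $k$, and use the disjointness of the $A_{k}$ to bound $\sum_{k=1}^{N}\mu(C\cap T^{-n_{j}}A_{k})\le\mu(C)$. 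Passing to the limit $j\to\infty$ in this finite sum yields $\ell(C)\sum_{k=1}^{N}\mu(A_{k})\le\mu(C)$ for every $N$, and letting $N\to\infty$ forces $\ell(C)=0$.

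The hardest part, in my view, is this translation of Lin's criterion into the zero-type property: the direct application of Lin to $\phi_{0}$ fails because $\mu(\phi_{0})\ne 0$, and the $T$-invariance identity $\mu(T^{-n}C)=\mu(C)$ means that the required decay of $\lambda(T^{-n}C)$ must come entirely from the $\mu$-mass of $T^{-n}C$ escaping into arbitrarily small neighbourhoods of the indifferent fixed point $0$, a mass-escape phenomenon that the $\sigma$-finite partition argument above captures quantitatively.
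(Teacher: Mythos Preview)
Your overall strategy (rewrite $\lambda(T^{-n}C)=\mu(\1_C\cdot\hat T^n\phi_0)$, subtract a normalised indicator, invoke exactness via Lin) matches the paper's. Two comments, one a slip and one a simplification.

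\medskip
\textbf{A swap error.} From $\mu(\1_C\cdot\hat T^n\1_A)=\mu(\1_{T^{-n}C}\cdot\1_A)$ you get $\mu(T^{-n}C\cap A)$, not $\mu(C\cap T^{-n}A)$; the transfer operator moves the $T^{-n}$ onto the factor paired with $\hat T^n$, here $\1_C$. With this correction your second Lin application also comes out right, since
\[
\frac{\mu(T^{-n}C\cap A)}{\mu(A)}-\frac{\mu(T^{-n}C\cap A')}{\mu(A')}
=\int \1_C\cdot\hat T^n\!\Bigl(\tfrac{\1_A}{\mu(A)}-\tfrac{\1_{A'}}{\mu(A')}\Bigr)\,d\mu,
\]
and the rest of your partition argument goes through verbatim with $\mu(T^{-n_j}C\cap A_k)$ in place of $\mu(C\cap T^{-n_j}A_k)$ (the bound by $\mu(C)$ now uses $\mu(T^{-n_j}C)=\mu(C)$).

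\medskip
\textbf{The paper's shortcut.} After the first Lin step the paper does not pass through the full zero-type property. It simply uses the trivial inequality
\[
\mu(T^{-n}C\cap A)\le\mu(T^{-n}C)=\mu(C),
\]
so that $\limsup_n\lambda(T^{-n}C)\le\mu(C)/\mu(A)$, and then lets $\mu(A)\to\infty$. Your second Lin application and the $\sigma$-finite partition argument are correct but unnecessary: the final inequality you derive, $\ell(C)\sum_{k\le N}\mu(A_k)\le\mu(C)$, is exactly this one-line bound repackaged. The payoff of your longer route is that you actually establish the zero-type property of $T$ along the way, which the paper does not isolate; the cost is an extra page of work for a statement that is not needed here.
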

    \begin{proof}
Let $C\in \mathcal{A}$ be given as stated in the proposition.  
For each $A\in \mathcal{A}$ for which
$0<\mu\left(A\right)<\infty$, we then have
\begin{eqnarray*}
\lambda\left(T^{-n} (C)\right) & = & \mu\left(\mathbf{1}_{T^{-n} (C)}\, 
\cdot \phi_{0} \right)=\mu \left(\mathbf{1}_{C}\circ T^{n}\cdot 
\phi_{0}\right)
\\ & =&  \mu\left(\mathbf{1}_{C}\circ T^{n}
  \cdot\left(\phi_{0}-\frac{\mathbf{1}_{A}}{\mu
 \left(A\right)}+\frac{\mathbf{1}_{A}}{\mu\left(A\right)}\right)\right)\\ & \leq & \left\Vert \hat{T}^{n}\left(\phi_{0}-\frac{\mathbf{1}_{A}}{\mu
 \left(A\right)}\right)\right
 \Vert _{1}+\frac{\mu\left(T^{-n}\left(C\right)\cap 
 A\right)}{\mu\left(A\right)}
 \\ & \leq &\left\Vert \hat{T}^{n}\left(\phi_{0}-\frac{\mathbf{1}_{A}}{\mu
 \left(A\right)}\right)\right
 \Vert _{1}+\frac{\mu\left(C\right)}{\mu\left(A\right)} \longrightarrow
 \frac{\mu\left(C\right)}{\mu\left(A\right)}, \mbox{ 
 for $n$ tending to infinity}.
\end{eqnarray*}
Here, the latter follows, since $T$ is exact and $\mu\left( \left( 
\phi_{0}-{\1}_{A}/\mu
\left(A\right)\right)\right)=0$, and hence, Lin's criterion, mentioned 
at the beginning of this section, is applicable. 
Therefore,
by choosing $\mu\left(A\right)$  arbitrarily large,  the proposition 
follows.
\end{proof}

\begin{proof}[Second proof of Theorem \ref{thm1}] 
In Proposition \ref{pre-exact} put $C = \mathcal{C}_{1}$, and then 
use  the fact
that $\mathcal{C}_{n}=T^{-(n-1)}(\mathcal{C}_{1})$, for all $n \in \N$.
\end{proof}

\section{Proof of Theorem \ref{thm1.5}}
   \begin{proof}
	We employ several
	standard  arguments from infinite ergodic theory. First, 
	note that it is well known that   the 
	    induced map $T_{\mathcal{C}_{1}}$  of the Farey 
	    map $T$ on $\mathcal{C}_{1}$  is conjugate to 
	    the Gauss map $\mathfrak{g}$. This then immediately gives that $T_{\mathcal{C}_{1}}$
	    is continued fraction mixing (see 
	    \cite{Wirsing}). Therefore, by \cite[Lemma 3.7.4]{Aaronson:97}, it follows 
	    that $\mathcal{C}_{1}$ is a Darling--Kac set for $T$. 
	    This implies that  there  exists a sequence 
	    $\left(\nu_n\right)$ (the return
	  sequence of $T$) such that 
	  \[ \lim_{n \to \infty} \frac{1}{\nu_n} \sum_{i=0}^{n-1} \widehat{ T}^i 
	  \1_{\mathcal{C}_{1}}(x) =
	  \mu(\mathcal{C}_{1})
	  =\log 2, 
	  \mbox{ uniformly for $\mu$-almost every  } x \in \mathcal{C}_{1}.\]
	  In order to determine the asymptotic type of the sequence $(\nu_n)$, 
	  recall from \cite[Section 3.8]{Aaronson:97}
	  that for a set $C\in\mathcal{A}$ such that $0<\mu\left(C\right)<\infty$,
	  the wandering rate of $C$ is given by the sequence 
	  $\left(W_{n}\left(C\right)\right)$,
	  where  \[
	  W_{n}\left(C\right):=\mu\left(\bigcup_{k=1}^{n}T^{-(k-1)}(C)\right).\]
	  Let us  compute 
$\left(W_{n}(C)\right)$ for $C= \mathcal{C}_{1}$.  Namely, for all $n\in\N$ we have  \[
W_{n}\left(\mathcal{C}_{1}\right)=\mu\left(\bigcup_{k=1}^{n}
T^{-(k-1)}\left(\mathcal{C}_{1}\right)\right)=\mu\left(\1_{[1/(n+1),1]}\right)=
\log(n+1).\]
	   Note that  this   wandering rate
	   is  slowly varying at infinity,
	  that is  (see e.g. \cite{BinghamGoldieTeugels:89}),   \[
	  \lim_{n\to\infty}W_{k\cdot n}\left(\mathcal{C}_{1}\right)/W_{n}
	  \left(\mathcal{C}_{1}\right)=1,\mbox{ for each }k\in\N.\]
Also, note that, since $T$ has a Darling--Kac set, it follows from 
\cite[Proposition 3.7.5]{Aaronson:97} that $T$ is pointwise dual ergodic 
with respect to $\mu$, 
that is,
\[ \lim_{n\to \infty} \frac{1}{\nu_n} \sum_{i=0}^{n-1} \widehat{ 
	  T}^i f = \mu (f), \mbox{
	    for all } f\in L^1(\mu).\] 
	  In this situation we then have, by \cite[Proposition 
	  3.8.7]{Aaronson:97}, that 
	    the return
	  sequence 
	  and  the wandering rate are related 
	   through 
	   \[\lim_{n\to\infty}(n\cdot v_{n}/W_{n}(\mathcal{C}_{1}))=1.\] 
Combining these observations,  the proof of Theorem 
\ref{thm1.5} follows.
\end{proof}

\begin{rem}
Although we are not going to use these facts here, let us nevertheless remark that 
  the Farey map  $T$ has the the following additional infinite ergodic 
  theoretical properties. The 
  verification of these properties follows from standard infinite 
  ergodic theory
	    (cf. \cite{Aaronson:97},
	\cite{ADU}, \cite{Thaler:00}).

	\begin{itemize}
\item 
	  The map $T$ is rationally ergodic with respect to $\mu$. That is,
	  there exists a constant $c>0$ and a set $A$
	  with $0< \mu(A) < \infty$ such that for all $n\in \N$,
	  
 \hfill $\displaystyle
	 \int_A \bigg(\sum_{i=0}^{n-1} \1_A
	 \circ T^i\bigg)^2 d\mu < c \,   
	 \bigg(\int_A \sum_{i=0}^{n-1} \1_A \circ T^i d\mu\bigg)^2. 
	$ \hfill $(\ast)$
	\item 
	The map $T$ has the following mixing property. For $A$ with $0<\mu(A) <
	\infty$ such that  $(\ast)$ holds, we have for all $U,V \subset A$,
	\[  \lim_{n\to \infty} \frac{1}{\nu_n} \sum_{i=0}^{n-1} \mu(U \cap T^{-i} V) 
	= \mu(U)\mu(V). \]
	\end{itemize}
\end{rem}

\section{Proof of Theorem \ref{thm2}}
\begin{proof}
As already mentioned in the introduction, the proof of Theorem \ref{thm2}
will make use of some further, slightly more advanced infinite ergodic theory.
Let us begin with by first giving the concepts and results which are
relevant for the proof of Theorem \ref{thm2}. The following concept of 
a uniform set  is vital in many situations
within infinite ergodic theory, and this is also the case in our situation
here. (For further examples of interval maps (including the Farey map)
for which there exist uniform sets we refer 
to \cite{Thaler:80,Thaler:83}.)
\begin{itemize}
\item [(I)] (\cite[Section 3.8]{Aaronson:97}) \textit{A set $C\in\mathcal{A}$
with $0<\mu\left(C\right)<\infty$ is called uniform for $f\in L_{1}^{+}\left(\mu\right)$,
if  $\mu$--almost everywhere and uniformly
on $C$ we have that \[
\lim_{n\to\infty}\frac{1}{v_{n}}\sum_{k=0}^{n-1}\hat{T}^{k}\left(f\right)=\mu(f),\]
where  $\left(v_{n}\right)$ denotes the return 
sequence of $T$, and uniform convergence is meant with respect to $L_{\infty}\left(\mu|_{C}\right)$.} 
\end{itemize}
Note that it is not difficult to see that the Farey map $T$ satisfies
Thaler's conditions, among which Adler's condition, i.e. $T''/(T')^{2}$
is bounded throughout $(0,1)$, is the most important one (see \cite{Thaler:80,Thaler:83}).
This then immediately implies that we have the following, where, as
in Section \ref{subsec:Farey}, the function $\phi_{0}$ is given
by $\phi_{0}(x)=x$. 
\begin{itemize}
\item [(II)] {\em Let $C\in\mathcal{A}$ be given with $\lambda\left(C\right)>0$
and so that there 
exists an $\epsilon>0$ such that  $x>\epsilon$, for all $x \in C$.
We then have that $C$
is a uniform set for the function $\phi_{0}$. } 
\end{itemize}
Now, the crucial notion for proving the sharp asymptotic result of
Theorem \ref{thm2} is provided by the following concept of a
uniformly returning set. (For further examples of one dimensional
dynamical systems which allow uniformly returning sets for some appropriate
function we refer to \cite{Thaler:00}.) 
\begin{itemize}
\item [(III)] (\cite{KesseboehmerSlassi:05}) \textit{A set $C\in\mathcal{A}$
with $0<\mu\left(C\right)<\infty$ is called uniformly returning for
$f\in L_{\mu}^{+}$ if there exists a positive increasing sequence
$\left(w_{n}\right)=\left(w_{n}(f,C)\right)$ of positive reals such
that $\mu$--almost everywhere and uniformly on $C$ we have 
\[
\lim_{n\to\infty}w_{n}\hat{T}^{n}\left(f\right)=\mu(f).\]
}  
\end{itemize}
In order to determine the asymptotic type of  the sequence 
$\left(w_{n}\right)$,  we use \cite[Proposition 
1.2]{KesseboehmerSlassi:05} where we found that \[
\lim_{n\to\infty}W_{n}\left(C\right)/w_{n}=1,\mbox{ for all $C\in\mathcal{A}$ 
such that $0<\mu\left(C\right)<\infty$},\]
where $\left(W_{n}\left(C\right)\right)$ denotes the wandering rate, 
which we already considered in the proof of Theorem 
\ref{thm1.5}.  
In \cite[Proposition 1.1]{KesseboehmerSlassi:05}
it was shown that every uniformly returning set is uniform. Whereas,
in \cite{KesseboehmerSlassi:08} we found explicit conditions under
which also the reverse of this implication holds. Applying these results
of \cite{KesseboehmerSlassi:08} to our situation here, one obtains
the following. 
\begin{itemize}
\item [(IV)] (\cite{KesseboehmerSlassi:08}) {\em Let $C\in\mathcal{A}$
with $0<\mu\left(C\right)<\infty$ be a uniform set, for some $f\in L_{\mu}^{+}$.
If the wandering rate $\left(W_{n}(C)\right)$ is slowly varying at
infinity and if the sequence $\left(\hat{T}^{n}\left(f\right)\mid_{C}\right)$
is decreasing, then we have that $C$ is a uniformly returning set
for $f$. Moreover, $\mu$--almost everywhere and uniformly on $C$
we have  \[
\lim_{n\to\infty}W_{n}(C)\,\hat{T}^{n}\left(f\right)=\mu(f).\]
 } 
\end{itemize}
With these preparations, we can now finish  the proof of Theorem 
\ref{thm2} as follows. 
 The idea is to apply the results
stated above to the situation in which the set $C$ is equal to $\mathcal{C}_{1}$.
For this, first recall that we have already seen that the wandering rate $\left(W_{n}(\mathcal{C}_{1})\right)$
of $\mathcal{C}_{1}$ is obviously slowly varying at infinity. In
fact, as computed in the proof of Theorem 
    \ref{thm1.5}, we have that $\lim_{n\to\infty}n\cdot 
    v_{n}/W_{n}(\mathcal{C}_{1})=1$, and also that $W_{n}(\mathcal{C}_{1}) 
    \sim \log n$. 
 Secondly, since $\mathcal{C}_{1}$ is bounded away from zero, the
result in (II) gives that $\mathcal{C}_{1}$ is a uniform set for
$\phi_{0}$. Thirdly, by Lemma \ref{lem:decrease}, we have that the
sequence $\big(\hat{T}^{n}\left(\phi_{0}\right)\mid_{\mathcal{C}_{1}}\big)$
is decreasing. Thus, we can apply the result in the first part of  (IV), which then
shows that $\mathcal{C}_{1}$ is a uniformly returning set for the
function $\phi_{0}$. Hence, the second part in (IV) gives that
$\mu$--almost everywhere and uniformly on $\mathcal{C}_{1}$ we have\[
\lim_{n\to\infty}W_{n}(\mathcal{C}_{1})\,\hat{T}^{n}\left(\phi_{0}\right)=\mu(\phi_{0})=1.\]
 Combining these observations, it now follows that \[
\lim_{n\to\infty}\left(\log n\cdot\lambda\left(\mathcal{C}_{n}\right)\right)=\lim_{n\to\infty}\left(W_{n}(\mathcal{C}_{1})\cdot\mu\left(\1_{\mathcal{C}_{1}}\cdot\widehat{T}^{n-1}(\phi_{0})\right)\right)=\mu\left(\1_{\mathcal{C}_{1}}\right)=\log2.\]
 This finishes the proof of Theorem \ref{thm2}. 
\end{proof}
\section{Thermodynamical significance of the sum-level 
sets}\label{Thermo}

In this section we discuss the thermodynamical significance
of the results of the previous sections. For this, recall that in \cite{KesseboehmerStratmann:04A} and \cite{KesseboehmerStratmann:07}
(see also \cite{KesseboehmerStratmann:04B}) we studied
the multifractal spectrum
$\{\tau(s):s\in\R\}$, given by \[
\tau(s):=\dim_{H}\left(\left\{ x=[a_{1},a_{2},\ldots]:\lim_{n\rightarrow\infty}
\frac{2\log q_{n}(x)}{\sum_{i=1}^{n}a_{i}}=s\right\} \right).\]
 Here, $p_{n}(x)/q_{n}(x):=[a_{1},a_{2},\ldots,a_{n}]$ denotes the
$n$-th approximant of $x$, and $\dim_{H}$ refers to the Hausdorff
dimension. In order to compute this spectrum, the Stern--Brocot pressure
function $P$ turns out to be crucial. This pressure function is defined
for $t\in\R$ by \[
P(t):=\lim_{n\rightarrow\infty}\frac{1}{n}\log\sum_{I 
\in\mathcal{T}_{n}}\left(\diam(I)\right)^{t}.\]
 The following results give the main outcome concerning the 
properties of $P$ and $\tau$ and on how these functions are related.
This complete thermodynamical description of the Stern--Brocot system
was obtained in \cite[Theorem   1.1]{KesseboehmerStratmann:07}. Here,
$\gamma:=(1+\sqrt{5})/2$ denotes  the Golden Mean, and $P^{*}$
refers to the Legendre transform of $P$, given for $s\in\R$
by $P^{*}(s):=\sup_{t\in\R}\{t\cdot s-P(t)\}$. 
\begin{enumerate}
\item \textit{\cite[Theorem 1.1]{KesseboehmerStratmann:07}. For each $s\in[0,2\log\gamma]$,
we have that \[
\tau(s)=-P^{*}(-s)/s,\]
 with the convention that $\tau(0):=\lim_{s\searrow0}-P^{*}(-s)/s=1$.
Also, the dimension function $\tau$ is continuous and strictly decreasing
on $[0,2\log\gamma]$ and vanishes outside the interval $[0,2\log\gamma)$.
Moreover, the left derivative of $\tau$ at $2\log\gamma$ is equal
to $-\infty$.} 
 \textit{ The function $P$ is convex, non-increasing and differentiable
throughout $\R$. Furthermore, $P$ is real-analytic on $(-\infty,1)$
and vanishes on $[1,\infty)$.}  
\item \textit{\cite{HR,PS} \label{HRPS} We have that
\[
P(1-\epsilon)\sim-\epsilon/\log\epsilon, \mbox{ for $\epsilon$ tending 
to zero form above}.\]
 In particular, the Farey system has a second order phase transition
at $t=1$, that is, the function $P'$ is continuous and $P''$ is
discontinuous at $t=1$.}
\end{enumerate}
The following shows that the vanishing of $\lim_{n\to\infty}\lambda(\mathcal{C}_{n})$
is very much a phenomenon of the fact that the Stern--Brocot system
exhibits a phase transition of order two at $t=1$. At this point
of intermittency,  finite ergodicity breaks down and infinite ergodic
theory enters the scene. In particular, by (\ref{HRPS}), this abrupt transition from
finite to infinite ergodic theory happens in a way which is non-smooth.

One aspect of this intermittency  is given by the following.  For 
this, recall that  in \cite[Proposition 2.6]{KesseboehmerStratmann:04A} it 
was also shown that for each $s \in 
 (0,2\log\gamma]$ there exists an 
equilibrium measure $\nu_{s}$ for which $\dim_{H}(\nu_{s}) = \tau(s)$. Using 
the invariance  of $\nu_{s}$,   one immediately verifies that 
for each $s\in(0,2\log\gamma]$  we have that
 \[
\nu_{s}(\mathcal{C}_{1})=
\nu_{s}(\mathcal{C}_{n})=
\nu_{s}(\mathcal{C}_{n}^{c})=1/2 ,\mbox{ for all } n\in \N.\]
 In contrast to this, we have by Theorem \ref{thm1} and Theorem \ref{thm2} respectively,
\[
\lim_{n\rightarrow\infty}\lambda(\mathcal{C}_{n})=0
\mbox{ and }\lim_{n\rightarrow\infty}\lambda(\mathcal{C}_{n}^{c})
=1.\]

Another  aspect is provided by the following proposition, in which 
$P_{0}$
and  $P_{1}$
denote  the two partial
Stern--Brocot pressure functions, given  for $t\in\R$ by \[
P_{0}(t):=\lim_{n\rightarrow\infty}\frac{1}{n}\log\sum_{I\in
\mathcal{C}_{n}}\left(\diam(I)\right)^{t}\mbox{ and }P_{1}(t):=
\lim_{n\rightarrow\infty}\frac{1}{n}\log\sum_{I\in\mathcal{C}_{n}^{c}}
\left(\diam(I)\right)^{t}.\]

\begin{prop}
The outcome of the above complete thermodynamical description 
of the
Stern--Brocot system stays to be the same if we base the analysis 
exclusively on
either $\{ I \in \mathcal{C}_{n}: n \in \N\}$ or $\{I \in 
\mathcal{C}_{n}^{c}: n \in \N\}$, rather than on
all the intervals in $ \{ I \in \mathcal{T}_{n}: n \in \N\}$. In particular, we have that
\[
P(t)=P_{0}(t)=P_{1}(t),\mbox{ for all }t\in\R.\]
\end{prop}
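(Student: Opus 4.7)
The plan is to reduce the proposition to the single identity $P(t)=P_0(t)=P_1(t)$ for all $t\in\R$. Once this identity is established, the entire thermodynamical description from (1)--(2) transfers automatically, since the multifractal formula $\tau(s)=-P^{*}(-s)/s$ and its consequences, together with the analyticity, convexity and second-order phase transition properties of $P$, depend only on the pressure function. The rough splitting is immediate: the disjoint decomposition $\mathcal{T}_n=\mathcal{C}_n\sqcup\mathcal{C}_n^{c}$ yields $Z_n(t):=\sum_{I\in\mathcal{T}_n}\diam(I)^{t}=Z_n^{(0)}(t)+Z_n^{(1)}(t)$, from which $P(t)=\max\{P_0(t),P_1(t)\}$ follows at once, so the whole task reduces to showing $P_0(t)=P_1(t)$.

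The key step will be a recursive estimate of $Z_n^{(1)}(t)$ in terms of the sequence $(Z_m^{(0)}(t))_m$. Using the Farey coding of Section 2, every interval $I'\in\mathcal{C}_n^{c}$ other than the extremal one $I_{L^{n}}=[0,1/(n+1)]$ admits a unique decomposition $I'=I\cdot L^{j}$, where $j\in\{1,\ldots,n-1\}$ is the length of the trailing $L$-block in the Farey code of $I'$ and $I\in\mathcal{C}_{n-j}$ is the CF-cylinder obtained by stripping off that block. Since $I$ is a CF-cylinder, the restriction $T^{n-j}|_{I}$ coincides with an iterate of the Gauss map and therefore enjoys bounded distortion; combined with the identity $u_{0}^{j}([0,1])=[0,1/(j+1)]$ this yields
\[
\diam(I\cdot L^{j})\asymp\frac{\diam(I)}{j+1},
\]
with constants independent of $I,j,n$. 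Summing produces the desired recursion
\[
Z_n^{(1)}(t)\asymp (n+1)^{-t}+\sum_{j=1}^{n-1}(j+1)^{-t}Z_{n-j}^{(0)}(t).
\]

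From this the equality $P_0(t)=P_1(t)$ follows by standard exponential-rate analysis. The single term $j=1$ gives $Z_n^{(1)}(t)\gg Z_{n-1}^{(0)}(t)$ and hence $P_1(t)\geq P_0(t)$. For the reverse inequality, the trivial bound $Z_m^{(0)}(t)\geq\diam([\![ m]\!])^{t}\asymp m^{-2t}$ shows $P_0(t)\geq 0$ for $t\geq 0$, so in the sum above the exponential factor $e^{(n-j)P_0(t)}$ swamps the polynomial factor $(j+1)^{-t}\leq 1$ and the maximum is attained at bounded $j$, giving $Z_n^{(1)}(t)\leq n\,e^{nP_0(t)+o(n)}$; for $t<0$ the extra factor $(j+1)^{-t}\leq n^{|t|}$ is merely polynomial and therefore harmless on the exponential scale. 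Either way $P_1(t)\leq P_0(t)$, completing the proof of $P=P_0=P_1$.

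The main obstacle that the proof must navigate is the lack of global bounded distortion for the Farey map, stemming from the parabolic fixed point at $0$: the naive pairing $I_{VL}\leftrightarrow I_{VR}$ does not give comparable diameters (for instance $V=L^{n-1}$ produces a ratio of order $n$), so a direct term-by-term comparison between $\mathcal{C}_n$ and $\mathcal{C}_n^{c}$ is not available. The decomposition above finesses this by peeling off the trailing $L^{j}$-block explicitly and applying bounded distortion only on the CF-cylinder core $I\in\mathcal{C}_{n-j}$, where it is classical, so that the only extra weight picked up is the harmonic-type factor $1/(j+1)$, which is subexponential and therefore invisible to the pressure.
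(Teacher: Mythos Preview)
Your argument is correct but takes a genuinely different route from the paper's. The paper proceeds by a direct one-step comparison between $\mathcal{C}_n$ and $\mathcal{T}_{n-1}$ using the Stern--Brocot recursion: writing $t_{n,4k-2}=t_{n-1,2k-1}+t_{n-1,2k}$ and $t_{n,4k-1}=t_{n-1,2k}$ (and similarly for the other half), one gets
\[
t_{n-1,2k-1}\,t_{n-1,2k}\ \leq\ t_{n,4k-2}\,t_{n,4k-1}\ \ll\ n\,t_{n-1,2k-1}\,t_{n-1,2k},
\]
which yields $n^{-|t|}Z_{n-1}(t)\leq Z_n^{(0)}(t)\leq n^{|t|}Z_{n-1}(t)$ and hence $P_0=P$ in one stroke; the case $P_1=P$ is analogous. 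This is purely arithmetic and very short.

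Your approach instead decomposes $\mathcal{C}_n^{c}$ by the length of the trailing $L$-block and invokes bounded distortion of the induced (Gauss) map on the CF-cylinder core to obtain the convolution-type estimate $Z_n^{(1)}(t)\asymp (n+1)^{-t}+\sum_{j=1}^{n-1}(j+1)^{-t}Z_{n-j}^{(0)}(t)$, followed by an exponential-rate analysis. This is longer but more dynamical in flavour: it makes explicit that the only obstruction is the parabolic tail $u_0^{j}$, whose contribution is merely polynomial, and it would transfer without change to any parabolic interval map whose first-return map has bounded distortion. By contrast, the paper's argument is tied to the explicit mediant recursion of the Stern--Brocot denominators. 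Your observation that the naive pairing $VL\leftrightarrow VR$ fails (ratio of order $n$ at $V=L^{n-1}$) is exactly the reason the paper compares $\mathcal{C}_n$ with $\mathcal{T}_{n-1}$ rather than with $\mathcal{C}_n^{c}$ directly.
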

\begin{proof}
 Using the recursive definition of the Stern--Brocot sequence,
one immediately verifies that \[
t_{n-1,2k-1}\, t_{n-1,2k}\leq t_{n,4k-2}\, t_{n,4k-1}\leq n\,\, t_{n-1,2k-1}\, t_{n-1,2k},\]
 and \[
t_{n-1,2k}\, t_{n-1,2k+1}\leq t_{n,4k-1}\, t_{n,4k}\leq n\,\, t_{n-1,2k}\, t_{n-1,2k+1}.\]
 Combining these observations, we obtain \[
n^{-|t|}\,\sum_{I\in\mathcal{T}_{n-1}}\left(\diam(I)\right)^{t}
\leq\sum_{I\in\mathcal{C}_{n}}\left(\diam(I)\right)^{t}
\leq n^{|t|}\,\sum_{I\in\mathcal{T}_{n-1}}\left(\diam(I)\right)^{t}.\]
 This shows that $P(t)=P_{0}(t)$, for all $t\in\R$. The proof of
$P(t)=P_{1}(t)$ follows by similar means, and is left to the reader. \end{proof}
\begin{rem}
Note that Feigenbaum, Procaccia and Tél \cite{FPT} explored what they
called
the Farey tree model.  This model is based on the set of even intervals
\[
\left\{ \left[\frac{s_{n,4k-2}}{t_{n,4k-2}},\frac{s_{n,4k}}{t_{n,4k}}\right]:
k=1,\ldots,2^{n-2}\right\} ,\mbox{ for all }n\in\N.\]
Note that for the even intervals of any order $n\in \N$ we have, for all $t\in\R$, 
\begin{eqnarray*}
 & \, & \hspace{-14mm}\left(\diam\left(\left[\frac{s_{n,4k-2}}{t_{n,4k-2}},
 \frac{s_{n,4k}}{t_{n,4k}}\right]\right)\right)^{t}\\
 & = & \left(\diam\left(\left[\frac{s_{n,4k-2}}{t_{n,4k-2}},\frac{s_{n,4k-1}}{t_{n,4k-1}}\right]\right)+\diam\left(\left[\frac{s_{n,4k-1}}{t_{n,4k-1}},\frac{s_{n,4k}}{t_{n,4k}}\right]\right)\right)^{t}\\
\hspace{-7cm} & \asymp & \left(\diam\left(\left[\frac{s_{n,4k-2}}{t_{n,4k-2}},\frac{s_{n,4k-1}}{t_{n,4k-1}}\right]\right)\right)^{t}+\left(\diam\left(\left[\frac{s_{n,4k-1}}{t_{n,4k-1}},\frac{s_{n,4k}}{t_{n,4k}}\right]\right)\right)^{t}\\
 & = & \left(t_{n,4k-2}\, t_{n,4k-1}\right)^{-t}+\left(t_{n,4k-1}\, t_{n,4k}\right)^{-t}.\end{eqnarray*}
Hence, the pressure function arising from the Farey tree model 
coincides with the pressure function $P_{0}$.\end{rem}
 
 \section{Some Diophantine applications}\label{Diophant}
 Let us end the paper by  giving an interesting immediate  application 
 of 
 Theorem  \ref{thm2} to elementary metrical Diophantine analysis.  
 For this,  first recall the following well known result of 
 Khintchine 
 (see e.g. 
 \cite{Khintchine}), which states that  
 \[
 \limsup_{n\to\infty}\frac{\log\left(a_{n}/ n 
 \right)}{\log \log n} =1, \mbox{ for $\lambda$-almost every
 $[a_{1},a_{2},\ldots]$}.\]
 
 In contrast to this well known  Khintchine law,  Theorem  \ref{thm2} 
 now gives rise 
 to  the following algebraic Khintchine-like law. ( For some further 
results on the statistics of the sum of the first  continued fraction digits we refer 
to \cite{Hensley:00}.)
 \begin{lem}\label{CorD} 
 We have that  \[
 \limsup_{n\to\infty}\frac{\log\left(a_{n+1}/\sum_{i=1}^{n}a_{i}
 \right)}{\log \log\left(\sum_{i=1}^{n}a_{i}\right)}\leq0, \mbox{ for 
 $\lambda$-almost every $[a_{1},a_{2},\ldots]$}.\]
 \end{lem}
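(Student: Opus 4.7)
The plan is to deduce the lemma from the sharp asymptotic of Theorem \ref{thm2} via a direct Borel--Cantelli argument. Write $S_n := \sum_{i=1}^{n} a_{i}$; since $a_i \geq 1$, we have $S_n \to \infty$ for every $x = [a_1, a_2, \ldots]$, so it suffices to show that, for each $\epsilon > 0$, $\lambda$-almost every $x$ satisfies $a_{n+1} \leq S_n (\log S_n)^{\epsilon}$ for all sufficiently large $n$; a diagonal argument along $\epsilon = 1/\ell$, $\ell \in \N$, then yields the lemma.

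Fix $\epsilon > 0$, and for $k \in \N$ put $M_k := \lceil k(\log k)^{\epsilon}\rceil$ and
\[ E_k := \{x \in \mathcal{C}_k : a_{n+1}(x) > M_k, \text{ where $n$ is the unique index with } S_n(x) = k\}. \]
Using the identity $T^{S_n}(x) = [a_{n+1}, a_{n+2}, \ldots]$, which follows at once from the action of the Farey map on continued fractions, the defining condition of $E_k$ translates to $T^{k}(x) \leq 1/(M_k+1)$. Combined with Lemma \ref{Farey}, this yields $E_k = T^{-(k-1)}(A_k)$, where $A_k := \mathcal{C}_1 \cap T^{-1}((0, 1/(M_k+1)])$. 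A short calculation with the inverse branches $u_0, u_1$ identifies $A_k = [(M_k+1)/(M_k+2), 1] \subset \mathcal{C}_1$, whose $\mu$-measure is $\log(1 + 1/(M_k+1)) \asymp 1/M_k$.

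To bound $\lambda(E_k)$, I would invoke $d\lambda = \phi_{0}\,d\mu$, the $T$-invariance of $\mu$, and crucially the uniform convergence $\log k \cdot \widehat{T}^{k-1}(\phi_{0}) \to 1$ on $\mathcal{C}_1$ established within the proof of Theorem \ref{thm2}. These together give
\[ \lambda(E_k) = \mu\bigl(\1_{A_k} \cdot \widehat{T}^{k-1}(\phi_{0})\bigr) \ll \frac{\mu(A_k)}{\log k} \ll \frac{1}{k(\log k)^{1+\epsilon}}. \]
Since $\sum_{k} 1/(k(\log k)^{1+\epsilon})$ converges, the Borel--Cantelli lemma implies that $\lambda$-almost every $x$ lies in only finitely many $E_k$, finishing the reduction. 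The one delicate point is that the target sets $A_k$ depend on $k$; however, because $A_k \subset \mathcal{C}_1$ and the convergence from Theorem \ref{thm2} is uniform over all of $\mathcal{C}_1$, the displayed bound requires no further work, so the main effort has already been invested in Theorem \ref{thm2}.
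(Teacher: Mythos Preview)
Your proof is correct and follows essentially the same Borel--Cantelli strategy as the paper: the sets $E_k$ you define coincide with the paper's $\mathcal{E}_k^{\epsilon}$, and both arguments bound their Lebesgue measure by a constant times $k^{-1}(\log k)^{-(1+\epsilon)}$. The only difference is in how that bound is reached: the paper uses the elementary tail estimate $\sum_{a_{k+1}\geq \ell}\lambda([\hspace{-.6mm}[a_1,\ldots,a_{k+1}]\hspace{-.6mm}])\asymp \ell^{-1}\lambda([\hspace{-.6mm}[a_1,\ldots,a_k]\hspace{-.6mm}])$ together with the \emph{statement} $\lambda(\mathcal{C}_k)\sim 1/\log_2 k$ of Theorem~\ref{thm2}, whereas you reach the same estimate via the transfer-operator identity $\lambda(E_k)=\mu(\1_{A_k}\cdot\hat T^{k-1}\phi_0)$ and the uniform convergence on $\mathcal{C}_1$ established \emph{inside} the proof of Theorem~\ref{thm2}; the paper's route is thus marginally more self-contained, while yours makes the dynamical mechanism more transparent.
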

 \begin{rem}
    We choose to call the latter result algebraic Khintchine-like law,
   since  $\sum_{i=1}^{n}a_{i}$ represents   the  word 
   length 
     associated with Farey system,  whereas the parameter $n$ 
represents the word length
     associated with the Gauss system.
     \end{rem}
 \begin{proof}
  For each $n \in \N$ and $\epsilon >0$, let
  \[E_{n}^{\epsilon}:= \bigcup_{k\in \N} \left\{[\hspace{-.6mm}[a_{1},\ldots,a_{k+1}]\hspace{-.6mm}]: 
  \sum_{i=1}^{k} a_{i} =n, a_{k+1} \geq n (\log n)^{\epsilon} 
  \right\} ,\]
  and define
  \[\mathcal{E}_{n}^{\epsilon}:= \bigcup_{I\in 
  E_{n}^{\epsilon}} I.\]
 Then note that a routine calculation for the Lebesgue measure of 
 continued fraction cylinder sets gives, for all $k,\ell \in \N$,  
  \[ \sum_{a_{k+1} \geq  \ell}\lambda([\hspace{-.6mm}[a_{1},\ldots,a_{k},a_{k+1}]\hspace{-.6mm}]) 
  \asymp \ell^{-1} \lambda([\hspace{-.6mm}[a_{1},\ldots,a_{k}]\hspace{-.6mm}]).\]
  Using this estimate and Theorem  \ref{thm2}, we obtain
  \begin{eqnarray*}
     \lambda(\mathcal{E}_{n}^{\epsilon}) &=&  \sum_{k=1}^{n} 
     \sum_{(a_{1},\ldots,a_{k})\atop
     \sum_{i=1}^{k} a_{i} =n} \sum_{a_{k+1}\geq n (\log n)^{\epsilon} }
     \lambda([\hspace{-.6mm}[a_{1},\ldots,a_{k},a_{k+1}]\hspace{-.6mm}])  
     \asymp \sum_{k=1}^{n} 
     \sum_{(a_{1},\ldots,a_{k})\atop
     \sum_{i=1}^{k} a_{i} =n} 
     \frac{\lambda([\hspace{-.6mm}[a_{1},\ldots,a_{k}]\hspace{-.6mm}])}{ n (\log n)^{\epsilon} }\\
     &=& \left(n (\log n)^{\epsilon} \right)^{-1} \sum_{k=1}^{n} 
     \sum_{(a_{1},\ldots,a_{k})\atop
     \sum_{i=1}^{k} a_{i} =n} 
     \lambda([\hspace{-.6mm}[a_{1},\ldots,a_{k}]\hspace{-.6mm}])
 =   \left(n (\log n)^{\epsilon}\right)^{-1} 
 \lambda(\mathcal{C}_{n}) \\ &  \sim &
 \frac{\log 2}{ n (\log n)^{1+\epsilon}}.
        \end{eqnarray*}
 A straight forward application of the Borel-Cantelli Lemma then gives 
 that 
 \[ \lambda\left(\limsup_{n}  \mathcal{E}_{n}^{\epsilon}\right) =0, \mbox{ for each 
 $\epsilon>0$}.\]
 Hence, by considering  the complement of $\limsup_{n}  
 \mathcal{E}_{n}^{\epsilon}$ in $\mathcal{C}_{0}$, we have now shown 
 that, for each $\epsilon>0$ and 
  for $\lambda$-almost all $[a_{1},a_{2},\ldots]$,
 \[ a_{k+1} <  \left( \sum_{i=1}^{k} a_{i} \right)  \left(\log 
 \sum_{i=1}^{k} a_{i}\right)^{\epsilon}  
, \mbox{for 
 all $k\in \N$ sufficiently large}.\]
 By taking logarithms on both sides of the latter inequality,  the lemma
 follows.
 \end{proof}
 \begin{rem} Let us  remark that, in addition to the statement in 
 Lemma \ref{CorD}, we also have that  \[
  \liminf_{n\to\infty}\frac{a_{n+1}}{\sum_{i=1}^{n}a_{i}}=0, \mbox{ 
  for $\lambda$-almost 
 all $[a_{1},a_{2},\ldots]$}.\]
 This follows, since by 
 \cite[Theorem 1.1 (4)]{KesseboehmerSlassi08b}  one has that,
 for each $\epsilon>0$, \[
 \lambda\left(\left\{x=[a_{1},a_{2},\ldots]: \frac{a_{\theta_{n}(x)+1}}{\sum_{k=
 1}^{\theta_{n}(x)}
 a_{k}}>\epsilon,\;\theta_{n}(x)>0 \right\}\right)\sim\frac{{\epsilon}^{-1}
 \log\left(1+\epsilon\right)+\log\left(1+\epsilon^{-1}\right)}{\log
 n},\]
 where we have put  $\theta_{n}\left(\left[a_{1},a_{2},\ldots\right]\right):=
 \max\left\{ k \in\mathbb{N}_0\;:\;\sum_{i=1}^{k}a_{i}\leq n\right\} $.
 Therefore,   for each $\epsilon>0$ and for
 $\lambda$-almost every $[a_{1},a_{2},\ldots]$ we have that there 
 exists an increasing sequence $(n_{k})_{k\in \N}$ of positive 
 integers such that \[
 a_{n_{k}+1} \leq \epsilon \sum_{i=1}^{n_{k}}a_{i}, \mbox{ for all $k \in \N$}.\]

\end{rem}
\singlespacing

\end{document}